%

\documentclass[aap,MSNbibl,citesort,dvips]{arximspdf}
\usepackage{graphicx}

%

\doi{10.1214/11-AAP811} 
\volume{22}
\issue{4}
\pubyear{2012}
\firstpage{1728}
\lastpage{1743}

\makeatletter
\newtheorem{duge}{Lemma}[section]
\newproclaim{rem}[duge]{Remark}
\newtheorem{prop}[duge]{Proposition}
\newtheorem{theo}[duge]{Theorem}
\newtheorem{cor}[duge]{Corollary}
\renewcommand{\P}{\mathbf{P}}
\newcommand{\E}{\mathbf{E}}
\newcommand{\Z}{\mathbb{Z}}
\newcommand{\R}{\mathbb{R}}
\newcommand{\T}{\mathbb{T}}
\newcommand{\un}{\mathbf{1}}
\newcommand{\defeq}{\stackrel{\mathrm{def}}{=}}
\newcommand{\pere}[1]{\stackrel{\leftarrow}{#1}}
\newcommand{\fils}[2]{\stackrel{\rightarrow}{#1}^{{#2}}}
\makeatother

\begin{document}
\begin{frontmatter}

\title{Continuous-time vertex reinforced jump processes on Galton--Watson trees}
\runtitle{Vertex reinforced jump processes}

\begin{aug}
\author[A]{\fnms{Anne-Laure} \snm{Basdevant}\corref{}\ead[label=e1]{anne.laure.basdevant@normalesup.org}}
\and
\author[B]{\fnms{Arvind} \snm{Singh}\ead[label=e2]{arvind.singh@math.u-psud.fr}}
\runauthor{A.-L. Basdevant and A. Singh}
\affiliation{Universit\'{e} Paris Ouest and Universit\'{e} Paris Sud}
\address[A]{Laboratoire Modal'X\\
Universit\'{e} Paris Ouest \\
92000 Nanterre\\
France\\
\printead{e1}} 
\address[B]{D\'{e}partement de Math\'{e}matiques\\
Universit\'{e} Paris-Sud\\
91405 Orsay Cedex\\
France\\
\printead{e2}}
\end{aug}

\received{\smonth{6} \syear{2010}}
\revised{\smonth{9} \syear{2011}}

%
\begin{abstract}
We consider a continuous-time vertex reinforced jump process on a
supercritical Galton--Watson tree. This process takes values in the
set of vertices of the tree and jumps to a neighboring vertex with
rate proportional to the local time at that vertex plus a constant~$c$.
The walk is either transient or recurrent depending on this
parameter $c$. In this paper, we complete results previously
obtained by Davis and Volkov
[\textit{Probab. Theory Related Fields} \textbf{123} (2002) 281--300,
\textit{Probab. Theory Related Fields} \textbf{128} (2004) 42--62]
and Collevecchio
[\textit{Ann. Probab.} \textbf{34} (2006) 870--878,
\textit{Electron. J. Probab.} \textbf{14} (2009) 1936--1962]
by proving that there is a unique (explicit)
positive $c_{\mathrm{crit}}$ such that the walk is recurrent
for $c \leq c_{\mathrm{crit}}$ and transient for $c >
c_{\mathrm{crit}}$.
\end{abstract}

%
\begin{keyword}[class=AMS]
\kwd{60G50}
\kwd{60J80}
\kwd{60J75}.
\end{keyword}

\begin{keyword}
\kwd{Reinforced processes}
\kwd{phase transition}
\kwd{random walks on trees}
\kwd{branching processes}.
\end{keyword}

\end{frontmatter}

\section{Introduction}\label{sectionintro}

The model of the continuous-time vertex reinforced jump process
(VRJP) introduced by Davis and Volkov \cite{DavisVolkov02} may be
described in the following way: let $G$ be a locally finite graph
and pick $c>0$. Call $\operatorname{VRJP}(c)$ a continuous-time process
$(X(t),t\geq0)$ on the vertices of $G$, starting at time $0$ at
some vertex $v_0\in G$ and such that, if $X$ is at a vertex $v\in G$
at time $t$, then, conditionally on $(X(s),s\le t)$, the process $X$
jumps to a~neighbor $u$ of $v$ with rate
%
\begin{equation}\label{defLocalTimeL}
L_c(t,u) \defeq c+\int_{0}^t\un_{\{X(s)=u\}}\,ds.
\end{equation}
Equivalently, the walk stays at site $v$ an exponential time of
parameter $\sum_{u\sim v} L_c(t, u)$ and then jumps to a neighbor
$u$ with a probability proportional to $L_c(t,u)$.

The case $G = \Z$ was investigated by Davis and Volkov
\cite{DavisVolkov02} who proved that, for any $c>0$, the $\operatorname{VRJP}(c)$
is recurrent and the proportion of time spent at each site converges
jointly to some nondegenerate distribution. In a~subsequent article~\cite{DavisVolkov04},
the same authors studied the VRJP on more
general graphs. They showed that when $G$ is a tree, the walk can
either be recurrent or transient. For a regular $b$-ary tree (more
generally, a~tree satisfying a~so-called $L$-property), they proved
the existence of two constants
%
\begin{equation}\label{refctcr}
0 < c_r(b)\leq c_t(b)
\end{equation}
such that:
\begin{itemize}
\item For $c< c_r$, the $\operatorname{VRJP}(c)$ visits every vertex infinitely often a.s.
\item For $c> c_t$, the $\operatorname{VRJP}(c)$ visits every vertex only a finite
number of time a.s.
\end{itemize}
Although they did not prove that $c_r = c_t$, the computation of the
bound $c_t$ obtained in \cite{DavisVolkov04} already implies that
the $\operatorname{VRJP}(1)$ is transient on a 4-ary tree. More recently,
Collevecchio \cite{CollevecchioO6,Collevecchio09} showed that the
$\operatorname{VRJP}(1)$ on a $3$-ary tree is also transient with positive speed (and
a C.L.T. holds) and asked whether this result also holds for a
$\operatorname{VRJP}(1)$ on a binary tree.

The main result of this paper states that, for almost every
realization of an infinite supercritical Galton--Watson tree with
mean offspring distribution~$b$, one has $c_t(b) = c_r(b)$ and
recurrence occurs at the critical value. In fact, recalling
Lyons--Pemantle's criterion for recurrence/transience of a~random
walk in random environment (RWRE) on a Galton--Watson tree (see
Theorem 3 of \cite{LyonsPemantle92}), Theorem~\ref{maintheo} states
that the phase transition of a~$\operatorname{VRJP}(c)$ is exactly the same as
that of a discrete-time random walk in an i.i.d. random environment
where the law of the environment is given by the random variable
$m_c(\infty)$ defined below.

Concerning the discrete-time model of the linearly edge reinforced
random walk (LERRW), de Finetti's theorem implies that any LERRW on
an acyclic graph may be seen as a RWRE in a Dirichlet environment.
However, the non-exchangeability of the increments of a VRJP forbids
a direct interpretation of the process in terms of a time change of
a RWRE and we do not have a~convincing argument why the VRJP should
have the same phase transition as a RWRE (see Davis and Dean
\cite{DavisDean11} for a study of the relations between these models
in the one-dimensional case). For example, using Theorem~1.5 of
\cite{Aidekon08}, one can check that, on a regular tree, the random
walk in the random environment defined by $m_c(\infty)$ always has a
positive speed when it is transient. Does this result somehow imply
that a transient VRJP always has positive speed?

\begin{theo}\label{maintheo} For $c>0$, let $m_c(\infty)$ denote a
random variable on $(0,\infty)$ with density
%
\begin{equation}\label{theoform1}
\P\{m_c(\infty) \in dx\} \defeq\frac{c\exp(-
{(c(x-1))^2}/{2x})}{\sqrt{2\pi x^3}}\,dx.
\end{equation}
Define
%
\begin{equation} \label{theoform2}
\mu(c) \defeq\inf_{a\in\R}\E[m_c(\infty)^a] =
\frac{c}{\sqrt{2\pi}}\int_0^\infty
x^{-1}\exp\biggl(-\frac{(c(x-1))^2}{2x}\biggr)\,dx.
\end{equation}
Let $\T$ denote a Galton--Watson tree with mean $1 < b < \infty$. On
the event that~$\T$ is infinite, we have, for almost every realization
of $\T$:
\begin{itemize}
\item If $b\mu(c) \le1$, the $\operatorname{VRJP}(c)$ on $\T$ visits every vertex
infinitely often a.s.
\item If $b\mu(c) > 1$, the $\operatorname{VRJP}(c)$ on $\T$ visits every vertex only
finitely many times a.s.
\end{itemize}
\end{theo}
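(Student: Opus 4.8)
The plan is to transfer the Lyons--Pemantle recurrence/transience criterion for a random walk in an i.i.d.\ random environment on a Galton--Watson tree (Theorem~3 of \cite{LyonsPemantle92}) to the VRJP, the bridge being that the ``effective environment'' the walk builds along each edge is an independent copy of the inverse Gaussian variable $m_c(\infty)$ of \eqref{theoform1}. First I would reduce, via a zero--one law, to deciding whether the walk started at $v_0$ returns to $v_0$ infinitely often (on a tree this is equivalent to the walk visiting every vertex infinitely often). The trajectory then splits into alternating sojourns at $v_0$ and excursions into the subtrees of its children, a decomposition one iterates down the tree; the crucial structural point is that during any excursion inside the subtree rooted at a vertex $w$, the local times at the parent of $w$ and at the other subtrees of that parent stay frozen, so that the VRJP on a tree is self-similar in exactly the way an i.i.d.-environment RWRE is.

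Second, I would realize the VRJP through a Rubin-type construction: attach to each oriented edge an independent rate-one clock, the jump across the oriented edge $(u,v)$ occurring when the accumulated hazard $\int \un_{\{X(s)=u\}}\,L_c(s,v)\,ds$ reaches the next ring of that clock. Since $L_c(\cdot,v)$ stays constant while the walk is away from $v$, this clock is \emph{insulated} from the rest of the tree: writing $P_n=L_c(\cdot,u)$ and $Q_n=L_c(\cdot,v)$ at the successive crossings of the edge $\{u,v\}$, the pair obeys the closed recursion $P_n=P_{n-1}+E_n^u/Q_{n-1}$, $Q_n=Q_{n-1}+E_n^v/P_n$ with independent $\mathrm{Exp}(1)$ variables $E_n^u,E_n^v$ depending only on the two clocks of $\{u,v\}$. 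From this recursion $P_nQ_n$ grows linearly (an exact additive recursion together with the law of large numbers) while $\log(Q_n/P_n)$, being a martingale with summable increment variances plus an absolutely convergent remainder, converges a.s.; a direct computation identifies its limit $m_c(\infty)$ with the law \eqref{theoform1}. Since the clocks of distinct edges are independent and the subtrees of a Galton--Watson tree are conditionally independent, the family of edge-limits is i.i.d., and relative to it the walk at a vertex eventually chooses its next neighbour with probabilities arbitrarily close to those of the RWRE whose i.i.d.\ environment has law $m_c(\infty)$. One also checks here that the two expressions for $\mu(c)$ in \eqref{theoform2} coincide, the infimum over $a$ being attained at $a=\tfrac12$ by the symmetry $\E[m_c(\infty)^a]=\E[m_c(\infty)^{1-a}]$ of the inverse Gaussian law.

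Granting this, the two regimes follow by comparison with the RWRE. If $b\mu(c)>1$, that RWRE is transient by \cite{LyonsPemantle92}; setting up for the VRJP the same recursion as for the RWRE --- now governing the asymptotic probability $\gamma_v$ that an excursion into $T_v$ made after the neighbourhood of $v$ has settled fails to return to the parent of $v$ --- one shows, absorbing the pre-settlement discrepancy into the strict inequality, that $\gamma_v>0$ along a random ray, whence by the zero--one law the VRJP is transient. If $b\mu(c)\le 1$, the RWRE is recurrent, the same recursion forces every excursion to return a.s., and, since on first reaching a vertex the walk is \emph{more} strongly pulled back towards $v_0$ than in the limiting environment, no earlier excursion can fare better, so the VRJP is recurrent. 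The only delicate case is the equality $b\mu(c)=1$ --- the novelty of the theorem --- where there is no slack to absorb the small, unsigned fluctuations of $Q_n/P_n$ about its limit, and the recurrence of the critical RWRE must be transferred quantitatively, using the summability of the increment variances established above.

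The main obstacle is exactly this last transfer. The Rubin construction displays the \emph{limiting} environment transparently as an i.i.d.\ field of inverse Gaussian variables, but because the increments of the VRJP are not exchangeable the process is not a time change of a RWRE, and one must control --- uniformly in time, and sharply enough to reach the critical value $b\mu(c)=1$ --- the discrepancy between the environment the walk has built up by a given time, which depends on the random, unbounded amounts of local time already laid down throughout the tree, and the i.i.d.\ field it converges to.
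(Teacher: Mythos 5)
Your plan rests on a comparison between the VRJP and the RWRE in the i.i.d.\ environment $m_c(\infty)$, and that comparison is exactly the step that is never supplied. The paper itself warns that ``the non-exchangeability of the increments of a VRJP forbids a direct interpretation of the process in terms of a time change of a RWRE'': the environment your walk sees at the $k$-th crossing of an edge is the \emph{current} ratio of local times, which is built adaptively, is correlated with the walk's entire past, and only converges to the inverse Gaussian limit along rays the walk keeps revisiting. Your two bridging claims --- that ``the walk is more strongly pulled back towards $v_0$ than in the limiting environment'' in the recurrent regime, and that the ``pre-settlement discrepancy'' can be ``absorbed into the strict inequality'' in the transient regime --- are not inequalities you have established, and it is not even clear they point in a usable direction: on first reaching a vertex the ratio child/parent is $c/L\le 1$, but $m_c(\infty)$ can be either side of $1$, and no coupling or stochastic domination between the excursion measures of the two processes is constructed. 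Finally, you explicitly concede that the critical case $b\mu(c)=1$ --- the actual novelty of the theorem --- is an unresolved ``obstacle'' in your scheme, so even granting the strict cases the proof is incomplete.

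For contrast, the paper never compares the VRJP to a RWRE trajectorially. It uses the restriction principle to read off, at the time $\xi(t)$ when the root's local time reaches $t$, the exact joint law of all local times: along each ray they form a Markov chain $Z_k$ with transitions given by $A_c$, and globally they form a branching Markov chain $F$ whose survival (non-absorption at $c$) as $t\to\infty$ is equivalent to transience. Transience for $b\mu(c)>1$ then follows from a first-moment branching argument combined with a large-deviation lower bound $\liminf_n \P_{x_0}\{Z_n\ge x_0\}^{1/n}\ge\mu(c)$ (Lemma \ref{keylemma}, via a comparison with a genuine random walk and the Biggins et al.\ result); recurrence for $b\mu(c)<1$ is the Davis--Volkov union bound $P_T\{G_n<a^n\}\le(\mu(c)a^{1/2})^nV_n$; and the critical case is handled by a soft percolation argument (Proposition \ref{propcritique}): if the walk were transient at $b\mu(c)=1$, the branching Markov chain would still survive on a thinned tree of mean $\tilde b<b$, contradicting the subcritical recurrence already proved. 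That the answer coincides with the Lyons--Pemantle criterion is, as the authors stress, an observation after the fact, not an ingredient of the proof. Your Rubin-type identification of the edge limit $m_c(\infty)$ and the symmetry $\E[m_c(\infty)^a]=\E[m_c(\infty)^{1-a}]$ giving the minimizer $a=\tfrac12$ are correct and consistent with Section \ref{sectionprelim01}, but they do not by themselves yield the theorem.
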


For $c=1$, we have $1/\mu(1)\simeq1.095$. Therefore the $\operatorname{VRJP}(1)$ is
transient on any regular $b$-ary tree with $b\geq2$. Making a
change of variable (see Appendix of \cite{DavisVolkov04}), the
function $\mu$ may be rewritten in the form
\[
\mu(c)=\frac{1}{\sqrt{2\pi}}\int_{-\infty}^{\infty}\frac
{e^{-y^2/2}}{\sqrt{1+y^2/(4c^2)}}\,dy.
\]
Thus, $\mu$ is continuous, strictly increasing on $[0,\infty)$ with
$\lim_0 \mu= 0$ and \mbox{$\lim_\infty\mu= 1$} (see Figure \ref{fignu}).
Denoting by $\mu^{-1}$ its inverse, we get the following.

\begin{cor} For any supercritical Galton--Watson tree with mean
$1<b<\infty$, with the notation (\ref{refctcr}), we have, for almost
every realization where the
tree is infinite,
\[
c_t(b) = c_r(b) = \mu^{-1}(1/b).
\]
In particular, the recurrence/transience phase transition for VRJP
on the class of Galton--Watson tree is monotonic w.r.t. the
reinforcement parameter $c$; that is, if the $\operatorname{VRJP}(c)$ is
transient for some $c>0$, then the $\operatorname{VRJP}(\tilde{c})$ is transient
for any $\tilde{c} \geq c$.
\end{cor}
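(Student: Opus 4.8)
The plan is to read the Corollary off Theorem~\ref{maintheo} together with the analytic description of $\mu$ established just above (continuity, strict increase on $[0,\infty)$, and $\lim_0\mu=0$, $\lim_\infty\mu=1$). First I would note that $1<b<\infty$ forces $1/b\in(0,1)$, which is precisely the range of $\mu$; since $\mu$ is a continuous strictly increasing bijection from $[0,\infty)$ onto $[0,1)$, the number $\kappa\defeq\mu^{-1}(1/b)$ is a well-defined positive real and, by strict monotonicity,
\[
b\mu(c)\le 1 \iff c\le\kappa , \qquad b\mu(c)>1 \iff c>\kappa .
\]
Feeding these equivalences into Theorem~\ref{maintheo} turns its dichotomy into the assertion that, for a.e.\ realization of $\T$ on $\{\T\text{ infinite}\}$, the VRJP($c$) on $\T$ visits every vertex infinitely often when $c\le\kappa$ and only finitely many times when $c>\kappa$. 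Interpreting this on a single almost sure event valid simultaneously for all $c$ (the next paragraph explains why one may do so), it is precisely the statement $c_r(b)=c_t(b)=\mu^{-1}(1/b)$ in the notation of \eqref{refctcr}, and the monotonicity of the phase transition then follows at once, since the set of parameters $c$ for which the VRJP is transient on (a.e.) $\T$ is the half-line $(\mu^{-1}(1/b),\infty)$, an up-set.

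The only delicate point is the order of quantifiers: the exceptional null set produced by Theorem~\ref{maintheo} could \emph{a priori} depend on $c$, whereas the Corollary must hold for all $c$ on a single almost sure event. I expect this to be harmless, because the proof of Theorem~\ref{maintheo} presumably reduces the recurrence/transience of the VRJP($c$) to an almost sure criterion on $\T$ in which $c$ enters only through the \emph{deterministic} quantity $b\mu(c)$; the exceptional set is then $c$-independent and the substitution above gives the result for every real $c$ at once. If instead only a $c$-by-$c$ statement were available, I would recover the uniform version by intersecting, over the countable set $(\mathbb{Q}\cap(0,\infty))\cup\{\kappa\}$, the full-measure events of Theorem~\ref{maintheo}: on the resulting full-measure event one has recurrence for every rational $c\le\kappa$ (and for $c=\kappa$) and transience for every rational $c>\kappa$, whence $c_r(b)\ge\kappa$ and $c_t(b)\le\kappa$; combining with $c_r(b)\le c_t(b)$ gives the claimed equality, while extending the recurrence/transience conclusions from a dense set of parameters to all real $c$ uses the monotone comparison of the VRJP($c$) in $c$ underlying \eqref{refctcr}.

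In short, the substance of the proof is just the elementary translation through $\mu^{-1}$ carried out in the first paragraph; the main obstacle, if any, is the bookkeeping needed to guarantee that the recurrence/transience alternative for the VRJP($c$) on $\T$ holds, on one almost sure event, for \emph{every} value of the parameter rather than merely on a dense set.
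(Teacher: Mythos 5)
Your first paragraph is exactly the paper's (implicit) proof: the corollary is stated there without a separate argument, as the immediate translation of Theorem \ref{maintheo} through the continuity and strict monotonicity of $\mu$, and you carry out that translation correctly. The extra care you take over the order of quantifiers goes beyond what the paper records, and your primary resolution is sound for the recurrence side (the only tree-dependent input there is $\lim_n V_n^{1/n}=b$, a $c$-independent full-measure event). One caveat on your fallback: the step ``extending \dots to all real $c$ uses the monotone comparison of the VRJP($c$) in $c$ underlying \eqref{refctcr}'' is circular --- no such comparison is available, and the paper explicitly remarks that monotonicity in $c$ is only known \emph{as a consequence of} the explicit formula for $\mu$, i.e.\ as the conclusion of this very corollary, not as an input to it. Since each fixed $c$ is covered by Theorem \ref{maintheo} and the paper itself does not address the simultaneous-in-$c$ issue, this does not put you below the paper's own standard, but that particular sentence should be removed or replaced by the countable-intersection argument alone.
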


Let us note that, although this monotonicity result w.r.t. the
parameter~$c$ seems quite natural, we do not know how to prove it
without using the explicit computation of $\mu$ to assert that this
function is monotonic. More generally, we do not know how to prove
a similar result for an infinite graph which contains loops.

\begin{figure}

\includegraphics{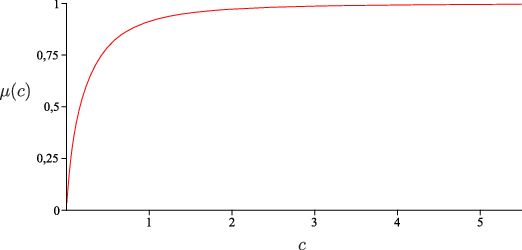}

\caption{Graph of the function $\mu$.}\label{fignu}
\end{figure}

%

\section{Preliminary results}\label{sectionpreliminaries}
In this section, we recall some important results concerning VRJP
obtained by Davis and Volkov in \cite{DavisVolkov02,DavisVolkov04}
which will play a~key role in the proof of Theorem \ref{maintheo}.
We start with the so-called \textit{restriction principle} for VRJP
which follows from the lack of memory of the exponential law.
\begin{prop}[(Restriction principle; Davis, Volkov \cite{DavisVolkov04})]
Let $G$ be a connected graph and let $G_1$ be a connected subgraph
with the property that for any path starting in any $v\in
G\setminus G_1$ and ending in $G_1$, the first ``port of entry'' into\vadjust{\goodbreak}
$G_1$ is uniquely determined. Assume moreover that on each connected
component of $G\setminus G_1$, the $\operatorname{VRJP}(c)$ is recurrent. Then the
$\operatorname{VRJP}(c)$ on $G$ starting at $v\in G_1$ restricted to $G_1$ has the
same law as the $\operatorname{VRJP}(c)$ on the subgraph $G_1$ starting from the
same point.
\end{prop}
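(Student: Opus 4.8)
The plan is to present the restriction as a time change of the VRJP($c$) process $X$ on $G$ and to show that, read in the new clock, it obeys the VRJP($c$) dynamics on $G_1$; the two ingredients are the lack of memory of the exponential law and the fact that the local times at vertices of $G_1$ are not disturbed by the excursions of $X$ into $G\backslash G_1$. First I would unwind the port-of-entry hypothesis: since every path from $G\backslash G_1$ to $G_1$ enters through a single, path-independent vertex, each connected component $C$ of $G\backslash G_1$ is attached to $G_1$ through exactly one vertex $\partial C\in G_1$, i.e.\ all edges between $C$ and $G_1$ have endpoint $\partial C$. Hence every excursion of $X$ out of $G_1$ into such a $C$ begins and ends at $\partial C$, and by the recurrence assumption on the components each such excursion has a.s.\ finite length. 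Setting $\sigma(t)\defeq\int_0^t\un_{\{X(s)\in G_1\}}ds$ and letting $\sigma^{-1}$ be its right-continuous inverse, I would define the restricted process by $Y(u)\defeq X(\sigma^{-1}(u))$ and aim to prove that $Y$ is a VRJP($c$) on $G_1$ started from $v_0$.

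The crux is the following observation. During an excursion into a component $C$, the walk $X$ never sits at a vertex of $G_1$, so for every $w\in G_1$ the function $t\mapsto L_c(t,w)$ is constant on that time interval; equivalently, expressed in the clock $\sigma$, the local time at each vertex of $G_1$ grows exactly as it would for a VRJP($c$) run on $G_1$ alone, and $\sigma$ is frozen while $X$ is outside $G_1$. Now start $X$ from a visit to a vertex $v\in G_1$: its sojourn there is exponential with rate $\sum_{u\sim v}(c+\ell(u))$ and at the end of the sojourn it picks a neighbour with probability proportional to $c+\ell(u)$; crucially, all these rates stay unchanged throughout the whole stretch separating two successive jumps of $X$ along edges of $G_1$ issued from $v$ (during that stretch $X$ only ever sits at $v$ or makes excursions into components $C$ with $\partial C=v$, none of which touches another vertex of $G_1$, nor $v$ again before the stretch ends). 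Thinning this exponential sojourn according to whether the ringing clock points into $G_1$ or into $G\backslash G_1$, and summing the geometric number of successive $v$-sojourns interrupted by excursions of zero $\sigma$-length, one finds that the amount of $\sigma$-time $Y$ spends at $v$ before jumping is exponential with parameter $\sum_{u\sim v,\ u\in G_1}(c+\ell(u))$ and that the next vertex is chosen among the $G_1$-neighbours of $v$ with probability proportional to $c+\ell(u)$ --- precisely the VRJP($c$) dynamics on $G_1$. Iterating over the successive $G_1$-vertices visited by $X$ yields the claim.

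The step I expect to require the most care is making this "thin and glue" argument rigorous together with the time change; cleanest is an induction on the number of jumps of $Y$, invoking at each stage the strong Markov property (of $X$ together with its collection of local times) at the stopping time when it re-enters $G_1$, and the lack of memory of the exponential law to re-initialise the clocks attached to the $G_1$-neighbours of the current vertex. The one genuinely substantive point is the well-definedness of the time change, i.e.\ the a.s.\ finiteness of every excursion, and this is exactly where the recurrence hypothesis on the components enters: until it hits $\partial C$, an excursion into $C$ behaves like a VRJP($c$) on $C\cup\{\partial C\}$ started at $\partial C$, and since the VRJP($c$) on $C$ does not escape to infinity while the return rate to $\partial C$ stays bounded below by $c>0$, the excursion terminates a.s.; a monotonicity remark --- a larger accumulated local time at $\partial C$ only hastens the return --- removes the dependence on whatever local time had already been built up at $\partial C$ before the excursion. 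Finally, I would note that $Y$ retains no residual information about the excursions, because the only rates governing the motion of $X$ inside $G_1$, namely the $L_c(t,w)$ with $w\in G_1$, are functions of the local times at $G_1$-vertices alone.
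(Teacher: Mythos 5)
The paper states this proposition without proof, quoting it from Davis and Volkov and noting only that it ``follows from the lack of memory of the exponential law''; your time-change-plus-excursion argument is a correct fleshing-out of exactly that idea (identification of a single attachment vertex $\partial C$ per component, a.s.\ finiteness of excursions via the recurrence hypothesis, invariance of the $G_1$-local times during excursions, and memorylessness to glue the interrupted sojourns). One small imprecision: the rates pointing from $v$ into a component $C$ \emph{do} increase after each excursion (the local times inside $C$ grow), so the successive sojourns at $v$ are not identically distributed and the number of interruptions is not geometric --- but this is harmless, since the thinning argument only needs the memorylessness and the constancy of the rates directed toward the $G_1$-neighbours, which you do establish.
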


We shall make intensive use of this result in the case where $G$ is
a rooted tree and $G_1$ is a subtree of $G$ (e.g., the ball
of radius $N$ centered at the~root).

\subsection{VRJP on the graph $\{0,1\}$}\label{sectionprelim01}

In view of the restriction principle stated above, many properties
of the VRJP on an acyclic graph can be derived from the study of the
VRJP on the simpler graph $G_0 \defeq\{0,1\}$. A detailed analysis
of the VRJP on $G_0$ is undertaken in \cite{DavisVolkov02}. Consider
a $\operatorname{VRJP}(c)$ on $G_0$, starting at $0$. For $t \geq c$, define the
stopping time
\[
\xi(t)\defeq\inf\{s>0, L_c(s,0)=t\}
\]
and
%
\begin{equation}\label{defAc}
A_c(t)\defeq L_c(\xi(t),1).
\end{equation}
The quantity $A_c(t)-c$ corresponds to the time spent at site $1$
before spending time $t-c$ at site $0$. The variable $A_c(t)$ takes
values in $[c,\infty)$ and has an atom at $c$. More precisely,
denoting by $\mathcal{E}(c)$ an exponential random variable with
parameter $c$, we have
%
\begin{eqnarray}\label{tes}
\P\{A_c(t)=c\} &=& \P\{\mbox{the $\operatorname{VRJP}(c)$ does not jump
before time $t-c$}\}\nonumber\\
&=&\P\{\mathcal{E}(c)>t-c\}\\
&=&e^{-c(t-c)}.\nonumber
\end{eqnarray}
For $t>c$, the law of $A_c(t)$ conditioned on $\{A_c(t)>c\}$ is
absolutely continuous w.r.t. the Lebesgue measure, with strictly
positive density on $(c,\infty)$. Considering only the time spent at
site $1$ before the first return to site $0$, we get the lower
bound:
%
\begin{equation}\label{densitepositive}
\P\{A_c(t)\geq\alpha  |  A_c(t)> c\}\geq
\P\{\mathcal{E}(t)>\alpha-c\} = e^{-(\alpha-c)t}.\vadjust{\goodbreak}
\end{equation}
For $t\geq c$, define
\[
m_c(t)\defeq\frac{A_c(t)}{t}.
\]
It is proved in \cite{DavisVolkov02} that the process $(m_c(t),t\geq
c)$ is a positive martingale which converges a.s. toward the random
variable $m_c(\infty)$ defined in Theorem \ref{maintheo}. The
moments of $m_c(\infty)$ can be computed explicitly using
(\ref{theoform1}). For $\theta\in\R$, we get
\[
\E[m_c(\infty)^\theta]=\sqrt{\frac{2}{\pi}}ce^{c^2}K_{\theta-1/2}(c^2)
<\infty,
\]
where $K_\alpha(x)$ denotes the modified Bessel function of the
second kind of order~$\alpha$ (cf. \cite{AbramowitzStegun64}
for details on this class of special functions). Using $K_{\alpha} =
K_{-\alpha}$ and $K_{\alpha} \leq K_{\alpha'}$ for $0\leq\alpha
\leq\alpha'$, it follows that
%
\begin{equation}\label{mintminfty}
\min_{\theta\in\R}
\E[m_c(\infty)^\theta]=\E\bigl[\sqrt{m_c(\infty)}\bigr],
\end{equation}
which entails the second equality of (\ref{theoform2}).

\subsection{VRJP on trees}\label{sectionprelimTree}

Let $T$ be a deterministic locally bounded tree rooted at some
vertex $o$. According to Theorem $3$ of \cite{DavisVolkov04}, any
VRJP on $T$ is either recurrent (every vertex is visited infinitely
often a.s.) or transient (every vertex is visited only finitely many
times a.s.). Moreover, we have the following characterization of
recurrence and transience in terms of the local time of the walk at
the root:
%
\begin{equation}\label{equivalencerecurrence}
\mbox{The $\operatorname{VRJP}(c)$ on $T$ is recurrent} \quad\Longleftrightarrow\quad
\lim_{t\rightarrow\infty} L_c(t,o)=\infty.
\end{equation}
Define, for $t>c$,
\[
\xi(t)\defeq\inf\{s>0, L_c(s,o)=t\},
\]
and let $(v_0=o,v_1,\ldots,v_n)$ be a nearest-neighbor
self-avoiding path starting from the root of $T$ and ending at
$v_n$. For $0\le k \le n$, set
%
\begin{equation}\label{evolutionbranche}
Z_k\defeq L_c(\xi(t),v_k).
\end{equation}
If $T$ is a finite tree, then the $\operatorname{VRJP}(c)$ on $T$ is recurrent.
Applying the restriction principle to the subgraph
$(v_0=o,v_1,\ldots,v_n)$, it follows that the process $(Z_k)_{0\le k
\le n}$ is a Markov chain starting from $Z_0 = t$ with transition
probabilities
%
\begin{equation}\label{transitionProbaZ}
\P\{ Z_{k+1} \in E  |  Z_0,\ldots,Z_k = x\} = \P\{ A_c(x) \in E\},
\end{equation}
where $A_c$ is the random variable defined in (\ref{defAc}). Let us
note that $Z$ takes values in $[c,\infty)$ and that $c$ is an
absorbing point. Moreover, since $(A_c(t)/t)_{t\geq c}$ is a
martingale starting from $1$, the process $Z$ is also a (positive)
martingale. Therefore, $Z_n$ converges a.s. as $n$ tend to infinity
and the limit is necessarily equal to $c$ a.s.

\section{\texorpdfstring{Proof of Theorem \protect\ref{maintheo}}{Proof of Theorem 1.1}}\label{sectionProof}

We first set some notation. Let $\mathcal{T}$ be the set of all
locally finite rooted trees. Given a tree $T\in\mathcal{T}$, we
denote its root by $o$. For $v\in T$, we use the notation $\pere{v}$
for the father of $v$ and $\fils{v}{1},\fils{v}{2},\ldots$ for the
sons of $v$. We also denote by $|v|$ the height of the vertex $v$ in
the tree (i.e., its graph distance from the root). For $n\geq
0$, $T_n$ will stand for the subtree of $T$ of vertices of height
smaller than or equal to $n$.

In the following, $\nu$ will always denote a probability measure on
the nonnegative integers with finite mean $b>1$ and
$\mathbb{Q}_{\nu}$ will denote the probability measure on
$\mathcal{T}$ under which the canonical r.v. $\T$ is a Galton--Watson
tree with offspring distribution $\nu$.

For $c>0$, we consider on the same (possibly enlarged) probability
space a~process $X = (X(t),t\geq0)$ and a collection of probability
measures $(P_{T,c},\allowbreak  T\in\mathcal{T})$ called quenched laws such
that $X$ under $P_{T,c}$ is a $\operatorname{VRJP}(c)$ on $T$ with $X(0)=o$. The
annealed probability is defined by
\[
\mathbb{P}_{\nu,c} \defeq P_{\T,c} \otimes\mathbb{Q}_\nu.
\]
We say that $X$ under $\mathbb{P}_{\nu,c}$ is a $\operatorname{VRJP}(c)$ on a
Galton--Watson tree with reproduction law $\nu$. In the following, we
shall omit the subscripts $c,\nu$ when it does not lead to
confusion.\vspace*{-2pt}

\subsection{Restriction to trees without leaves}\label{sectionRestrictNoLeaf}
The Harris decomposition of a~supercritical Galton--Watson tree
states that conditionally on non-extinc\-tion,~$\T$ under
$\mathbb{Q}_\nu$ can be generated in the following way:
\begin{itemize}
\item Generate a Galton--Watson tree $\T_g$ with no leaf called the backbone.
\item Attach at each vertex $v$ of $\T_g$ a random number $N_v$ of
i.i.d. subcritical trees $\T_l^1,\ldots,\T_l^{N_v}$.
\end{itemize}
See, for instance, \cite{AthreyaNey72} for a precise description of
the laws of $N_v$, $\T_g$ and $\T_l$. Let us simply note that the
expected number of children per vertex of $\T_g$ is also equal to
$b$. Consider now a $\operatorname{VRJP}(c)$ on $\T$ on the event that $\T$ is
infinite. The restriction principle applied with $G=\T$ and
$G_1=\T_g$ implies that the $\operatorname{VRJP}(c)$ on $\T$ is transient if and
only if the $\operatorname{VRJP}(c)$ on $\T_g$ is transient. Since the criterion
for the transience/recurrence of the walk of Theorem \ref{maintheo}
only depends on $b$, it suffices to prove the result for trees
without leaves. In the sequel, we will always assume that this is
the case, that is,
\[
\nu(0)=0.\vspace*{-2pt}
\]

\subsection{\texorpdfstring{Proof of recurrence when $b\mu(c)<1$}{Proof of recurrence when b mu(c)<1}}\label{sectionrecurrence}
In \cite{DavisVolkov04}, Davis and Volkov proved that a $\operatorname{VRJP}(1)$ is
recurrent when $b\leq1.04$. In fact, their argument shows
recurrence whenever $b\mu(c)<1$ by simply fine-tuning some
parameters. We provide below a sketch of the proof and we refer the
reader to \cite{DavisVolkov04} for further details.

Consider a $\operatorname{VRJP}(1)$ on the nonnegative integers $\{0,1,\ldots\}$ and
denote by~$\sigma_n$ the first time the walk reaches level $n$.
It\vadjust{\goodbreak}
is proved in the Appendix of~\cite{DavisVolkov04} that, for any
$a>1$,
%
\begin{equation}\label{oldB}
\P\{L_1(\sigma_n,0)<a^n\}\le\bigl(\E\bigl[\sqrt{m_1(\infty)}\bigr]a^{1/2}\bigr)^n.
\end{equation}
Adapting the proof for any $c>0$, it is immediate to check that, for
any $\operatorname{VRJP}(c)$ on the nonnegative integers,
%
\begin{equation}\label{newB}
\P\{L_c(\sigma_n,0)<a^n\}\le
\bigl(\E\bigl[\sqrt{m_c(\infty)}\bigr]a^{1/2}\bigr)^n=(\mu(c)a^{1/2})^n.
\end{equation}
We now copy the argument of the proof of Theorem 5 of
\cite{DavisVolkov04} using the bound~(\ref{newB}) in place of
(\ref{oldB}). Let $T\in\mathcal{T}$ be an infinite tree and let $X$
denote a~$\operatorname{VRJP}(c)$ on $T$. Let $V_n$ denote the number of vertices
of $T$ of height $n$ and set
\[
G_n=L_c\bigl(\inf\{t>0,|X(t)|=n\},o\bigr)
\]
so that $G_n - c$ is the total time spent by $X$ at the root before
reaching a~vertex of height $n$. Conditioning on the position of $X$
when it reaches level~$n$ and applying the restriction principle to
the path connecting this vertex to the root, we find, using~(\ref{newB}),
%
\begin{equation}\label{borneGn}
P_T\{G_n<a^n\}\leq(\mu(c)a^{1/2})^n V_n.
\end{equation}
Assume now that the tree $T$ satisfies
\[
\liminf_{n\rightarrow\infty} V_n^{1/n}< \mu(c)^{-1};
\]
then (\ref{borneGn}) yields, taking $a$ sufficiently close to 1,
\[
P_T\{G_{n_k}<a^{n_k}\}\leq(1-\varepsilon)^{n_k}
\]
for some subsequence $(n_k)$ and some $\varepsilon>0$. Letting $k$
go to infinity, we get that
\[
\lim_{t\rightarrow\infty} L_c(t,o)=\infty \qquad\mbox{$P_T$-a.s.}
\]
Thus, the $\operatorname{VRJP}(c)$ on $T$ is recurrent according to
(\ref{equivalencerecurrence}). We conclude the proof for the
$\operatorname{VRJP}(c)$ on the Galton--Watson tree $\T$ noticing that, when
$b\mu(c) < 1$, we have for $\mathbb{Q}_\nu$-almost any tree $T \in
\mathcal{T}$,
\[
\lim_{n\rightarrow\infty} V_n^{1/n} = b < \mu(c)^{-1}.
\]

\subsection{The branching Markov chain $F$}\label{sectionBMC}

Recall that we assume $\nu(0) = 0$ so the tree $\T$ is infinite
$\mathbb{Q}_{\nu}$-a.s. We introduce a branching Markov chain $F$
indexed by the vertices of $\T$ and taking values in $[c,\infty)$,
\[
F \defeq\bigl(f(v), v\in\T\bigr) \in\bigcup_{T\in\mathcal{T}}[c,\infty]^T.
\]
More precisely, the population at time $n$ is indexed by
$\{v\in\T,  |v|=n\}$ and the set of positions of the particles of
$F$ at time $n$ is
\[
F_n \defeq\bigl(f(v),  |v|=n\bigr).\vadjust{\goodbreak}
\]
Thus, the genealogy of this branching Markov chain is chosen to be
exactly the Galton--Watson tree $\T$. In particular, under the
annealed probability~$\mathbb{P}$, each particle $v$ splits, after a
unit of time, into a random number $B$ of particles
$\fils{v}{1},\ldots,\fils{v}{B}$ where $B$ is distributed as $\nu$.
In order to characterize $F$, it remains to specify the law of the
position $f(v)$ of the particles. We choose the dynamics of $F$,
conditionally on its genealogy $\T$ in the following way:
\begin{longlist}[(a)]
\item[(a)] For any $n >0$, conditionally on $(f(u),  |u|< n)$, the
random variables $(f(v),|v|=n)$ are independent.
\item[(b)] For any $v \neq o$, conditionally on $(f(u),  |u|< |v|)$,
the random variable~$f(v)$ is distributed as $A_c(f(\pere{v}))$ where
$A_c$ is
defined by (\ref{defAc}).
\end{longlist}
We use the notation $\mathbb{P}_{x_0}$ for the annealed law where
$F$ starts with the initial particle~$o$ being located at $f(o) =
x_0$. Note that, since the tree is infinite, the Markov chain $F$
never becomes extinct. However, recalling that $c$ is an absorbing
point for~$A_c$, it follows that if a particle $v$ is located at
$f(v) = c$, then all its descendants are also located at $c$. Thus,
we will say that the process $F$ \textit{dies out} if there exists a
time $n$ such that all the particles at time~$n$ are at position $c$.
Otherwise, we say that the process \textit{survives}.

\begin{prop}\label{propmonotonyF}
For any $x \leq y$, the process $F$ under $\mathbb{P}_{x}$ is
stochastically dominated by $F$ under $\mathbb{P}_{y}$.
\end{prop}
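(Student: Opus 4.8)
The plan is to prove the stronger, pathwise statement that $F$ under $\mathbb{P}_x$ and $F$ under $\mathbb{P}_y$ can be realized simultaneously on one probability space, on a common genealogy, in such a way that every particle of the first process sits below the corresponding particle of the second; the asserted stochastic domination (with respect to the natural partial order on $\bigcup_{T\in\mathcal{T}}[c,\infty]^T$, where configurations are compared only on the same tree and coordinatewise) is then immediate. Since, conditionally on the genealogy, the particles of $F$ evolve by applying the kernel $x\mapsto A_c(x)$ along the edges of $\T$, the whole construction reduces to a single monotonicity property of $A_c$.

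The key point is that, on a single realization of a VRJP($c$) on $G_0=\{0,1\}$ started at $0$, the map $t\mapsto A_c(t)$ is non-decreasing on $[c,\infty)$. Indeed $s\mapsto L_c(s,0)$ is continuous and non-decreasing, so its generalized inverse $t\mapsto\xi(t)$ is non-decreasing; since the VRJP($c$) on $\{0,1\}$ is recurrent (Davis--Volkov \cite{DavisVolkov02}), $L_c(\cdot,0)$ is unbounded and $\xi(t)<\infty$ for every $t\ge c$, so the process $(A_c(t))_{t\ge c}=(L_c(\xi(t),1))_{t\ge c}$ is indeed defined on this sample path; and it is non-decreasing because $s\mapsto L_c(s,1)$ is non-decreasing and $\xi$ is non-decreasing (the boundary value $A_c(c)=c$ being the minimal value anyway). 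In particular, for $c\le x\le y$ one has $A_c(x)\le A_c(y)$ pointwise, so the law of $A_c(x)$ is stochastically dominated by that of $A_c(y)$. I expect this pathwise monotonicity of $A_c$ to be the only real content of the argument; everything else is bookkeeping.

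To build the coupling on the tree, I would sample a Galton--Watson tree $\T$ with offspring law $\nu$ and, independently, an i.i.d. family $(\mathcal{A}^{(v)})_{v\in\T,\,v\ne o}$, where each $\mathcal{A}^{(v)}=(\mathcal{A}^{(v)}(t))_{t\ge c}$ has the law of the process $(A_c(t))_{t\ge c}$ above (in particular each $\mathcal{A}^{(v)}$ is a.s. non-decreasing). For $x_0\in[c,\infty)$, define $f_{x_0}$ recursively along $\T$ by $f_{x_0}(o)=x_0$ and $f_{x_0}(v)=\mathcal{A}^{(v)}\big(f_{x_0}(\pere{v})\big)$ for $v\ne o$. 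Conditionally on $(f_{x_0}(u),\,|u|<n)$, the variables $(f_{x_0}(v),\,|v|=n)$ are independent (distinct vertices use distinct, independent $\mathcal{A}^{(v)}$), and $f_{x_0}(v)$ has the law of $A_c(f_{x_0}(\pere{v}))$; hence, conditionally on $\T$, the field $(f_{x_0}(v),v\in\T)$ follows exactly the dynamics (a)--(b), i.e. $(\T,(f_{x_0}(v)))$ has law $\mathbb{P}_{x_0}$.

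Finally, fix $c\le x\le y$ and feed the same family $(\mathcal{A}^{(v)})$ into both initial conditions. An induction on $|v|$ then gives $f_x(v)\le f_y(v)$ for all $v\in\T$: it holds for $|v|=0$ since $x\le y$, and if $f_x(\pere{v})\le f_y(\pere{v})$ then $f_x(v)=\mathcal{A}^{(v)}(f_x(\pere{v}))\le\mathcal{A}^{(v)}(f_y(\pere{v}))=f_y(v)$ by monotonicity of $\mathcal{A}^{(v)}$. Thus, on a common genealogy $\T$, the configuration of $F$ under $\mathbb{P}_x$ lies coordinatewise below that of $F$ under $\mathbb{P}_y$, which is precisely the claimed stochastic domination.
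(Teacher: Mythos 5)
Your proposal is correct and follows essentially the same route as the paper: the paper's (one-line) proof also rests on the pathwise monotonicity $A_c(x)\le A_c(y)$ for $c\le x\le y$ read off from (\ref{defAc}), followed by induction along the tree. Your explicit coupling via i.i.d.\ copies $\mathcal{A}^{(v)}$ of the process $(A_c(t))_{t\ge c}$ is just a careful unpacking of that same argument.
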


\begin{pf}
Recalling (\ref{defAc}), it is clear that $A_c(x) \leq A_c(y)$ for
any $c\leq x\leq y$ and the result follows by induction.
\end{pf}

\begin{prop}
Let $x_0>0$ and $N>0$ and let $(X^N(t),t\geq0)$ denote a $\operatorname{VRJP}(c)$
on the finite subtree $\T_N = \{v\in\T,  |v|\leq N\}$, with $X^N(0)
= o$. Set
\[
\xi^N(x_0)\defeq\inf\{s>0, L^N_c(s,o)=x_0\},
\]
where $L^N$ is defined as in \textit{($\ref{defLocalTimeL}$)} for
$X^N$. Then, the collections of random variables
$(L^N_c(\xi^N(x_0),v),  v \in\T_N)$ under $\mathbb{P}$ and
$(f(v),  v \in\T_N)$ under $\mathbb{P}_{x_0}$ have the same law.
\end{prop}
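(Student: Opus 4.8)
The plan is to show that the array $(L^N_c(\xi^N(x_0),v),\,v\in\T_N)$ associated to the VRJP on the finite tree $\T_N$ has exactly the branching-Markov-chain structure described in (a)--(b) defining $F$ under $\mathbb{P}_{x_0}$. Since both objects are conditional on the genealogy $\T$ (which is a Galton--Watson tree under $\mathbb{Q}_\nu$ in both cases), it suffices to work quenched, i.e. fix a finite deterministic tree $T=\T_N$ and compare $(L^N_c(\xi^N(x_0),v),\,v\in T)$ under $P_{T,c}$ with the array built inductively via $A_c$. First I would recall that on a \emph{finite} tree the VRJP($c$) is recurrent (stated in Section~\ref{sectionprelimTree}), so $L^N_c(t,o)\to\infty$ and the stopping time $\xi^N(x_0)$ is a.s.\ finite for every $x_0>0$; thus the array is well defined. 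The starting value is $L^N_c(\xi^N(x_0),o)=x_0$, matching $f(o)=x_0$.

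Next I would establish the key structural fact by induction on the height. Enumerate the vertices of $T$ level by level. The core input is the restriction principle: for each vertex $v$ of height $k\ge 1$ with father $\pere{v}$, consider the subgraph $G_1$ consisting of the path from the root $o$ down to $v$ (or, more to the point, the single edge $\{\pere{v},v\}$ together with everything "above" it), the complement being the remaining branches which are finite and hence recurrent. The restriction principle then says that, observed along the edge $\{\pere{v},v\}$, the process behaves as a VRJP($c$) on $\{0,1\}$, and by the definition (\ref{defAc}) of $A_c$ together with the Markov-chain description (\ref{transitionProbaZ}) of the branch process $(Z_k)$, the conditional law of $L^N_c(\xi^N(x_0),v)$ given the local times at all vertices of height $<k$ is that of $A_c(L^N_c(\xi^N(x_0),\pere{v}))$. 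This is property (b). Property (a) — conditional independence of the local times of the vertices at a common level given the past — follows because, once the local time profile on $T_{k-1}$ is frozen, the exploration of the distinct subtrees hanging below distinct height-$(k-1)$ vertices involves disjoint sets of edges and, by the restriction principle applied branch by branch, proceeds independently; here one uses that jumps across level $k-1$ into a given subtree have a uniquely determined port of entry, so the hypothesis of the restriction principle is met. Assembling these two facts, the quenched law of $(L^N_c(\xi^N(x_0),v),\,v\in T)$ coincides with the quenched law of $(f(v),\,v\in T)$, and integrating against $\mathbb{Q}_\nu$ gives the claim under the annealed measures.

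I expect the main obstacle to be the careful bookkeeping needed to apply the restriction principle \emph{repeatedly and simultaneously} so as to get the full joint law of the level-$k$ local times, rather than just a one-dimensional marginal: one must verify that after conditioning on the local-time profile up to height $k-1$ the residual dynamics in the different subtrees are genuinely independent, which requires checking that the "port of entry" condition holds along every relevant path and that the lack of memory of the exponential clocks lets one paste the conditioned pieces together. A secondary technical point is justifying that all the relevant stopping times ($\xi^N$, and the times at which the local-time profile on $T_{k-1}$ is revealed) are finite and that the strong Markov / conditioning manipulations are legitimate — this is routine given recurrence on finite trees, but should be stated. Once these points are in place, the induction itself is short.
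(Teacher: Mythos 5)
Your proposal is correct and follows essentially the same route as the paper, whose proof is the one-line observation that $\T_N$ is finite, hence $X^N$ is recurrent and $\xi^N(x_0)$ is a.s.\ finite, after which the restriction principle yields the branching Markov chain structure. Your expanded version simply makes explicit the level-by-level induction and the conditional-independence bookkeeping that the paper leaves implicit.
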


\begin{pf}
Simply notice that since the $\T_N$ is finite, $X^N$ is recurrent
and~$\xi^N$ is finite a.s. and apply the restriction principle for
VRJP.
\end{pf}

The VRJPs $X$ on $\T$ and $X^N$ on $\T_N$ coincide up to the first
time they reach a site of height $N$; therefore,
\begin{eqnarray*}
&&\mathbb{P}\{\mbox{$X$ reaches level $N$ before spending time $x_0-c$ at
the origin}\} \\
&&\qquad= \mathbb{P}\{\mbox{$X^N$ reaches level $N$ before spending time
$x_0-c$ at the origin}\} \\
&&\qquad= \mathbb{P}_{x_0}\{\mbox{the process $F$ does not die out before
time $N$}\}.
\end{eqnarray*}
Letting $N$ and then $x_0$ tend to infinity, and using
(\ref{equivalencerecurrence}), we get
\begin{eqnarray}\label{limitFsurvive1}
&&\mathbb{P}\{\mbox{$X$ visits every vertex of $\T$ finitely many
times}\}
\nonumber
\\[-8pt]
\\[-8pt]
\nonumber
&&\qquad=
\mathop{\lim  \uparrow}_{x_0\to\infty}\mathbb{P}_{x_0}\{\mbox{$F$
survives}\}.
\end{eqnarray}
The next proposition extends the $0-1$ law proved in
\cite{DavisVolkov04} for deterministic trees to Galton--Watson trees.

\begin{prop}[($0-1$ law for VRJP on Galton--Watson
trees)]\label{prop0-1} Let $\T$ be a Galton--Watson tree $\T$ without
leaves and with mean
$b>1$. Then, for any $c>0$, the $\operatorname{VRJP}(c)$ $X$ on $\T$ is either
recurrent or transient under the annealed law:
\begin{eqnarray*}
&&\mathbb{P}\{\mbox{$X$ visits every vertex of $\T$ finitely many
times}\} \\
&&\qquad= 1 - \mathbb{P}\{\mbox{$X$ visits every vertex of $\T$
infinitely often}\} \in\{0,1\}.
\end{eqnarray*}
\end{prop}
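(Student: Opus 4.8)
The plan is to reduce the statement, via the branching Markov chain $F$, to a fixed point equation for the generating function $g_\nu(s)\defeq\sum_{k\ge0}\nu(k)s^k$ whose only solutions in $[0,1]$ turn out to be $0$ and $1$. Set $q(x)\defeq\mathbb{P}_x\{F\text{ dies out}\}$ for $x\ge c$. Since dying out is a decreasing event for the coordinatewise partial order on configurations, Proposition~\ref{propmonotonyF} shows that $q$ is non-increasing; write $q_\infty\defeq\lim_{x\to\infty}q(x)$. By~(\ref{limitFsurvive1}) we have $\mathbb{P}\{X\text{ visits every vertex finitely many times}\}=1-q_\infty$, and since the quenched $0$--$1$ dichotomy of Theorem~3 of~\cite{DavisVolkov04}, applied to $\mathbb{Q}_\nu$-almost every tree, gives $\mathbb{P}\{X\text{ visits every vertex f.o.}\}+\mathbb{P}\{X\text{ visits every vertex i.o.}\}=1$, it is enough to prove $q_\infty\in\{0,1\}$.

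Next, I would decompose $F$ over the subtrees of $\T$ rooted at the children $\fils{o}{1},\ldots,\fils{o}{B}$ of the root, $B$ being distributed as $\nu$. By properties (a)--(b) of $F$, conditionally on $f(o)=x$ and on $B$, the positions $f(\fils{o}{i})$ are i.i.d.\ with the law of $A_c(x)$, and the branching Markov chains carried by the subtrees rooted at the $\fils{o}{i}$ are conditionally independent, each distributed as $F$ under $\mathbb{P}_{f(\fils{o}{i})}$. Because $c$ is absorbing for $A_c$, one checks, with a short bookkeeping on heights, that $F$ dies out if and only if all of these sub-chains die out; taking expectations yields
\begin{equation*}
q(x)=g_\nu\!\big(\E[q(A_c(x))]\big).
\end{equation*}
Now let $x\to\infty$: since $A_c(t)=t\,m_c(t)$ is non-decreasing in $t$ and $m_c(t)\to m_c(\infty)\in(0,\infty)$ a.s., we have $A_c(x)\uparrow\infty$ a.s., hence $q(A_c(x))\to q_\infty$ a.s.\ and, by bounded convergence, $\E[q(A_c(x))]\to q_\infty$. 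As $q(x)\to q_\infty$ and $g_\nu$ is continuous, we obtain $q_\infty=g_\nu(q_\infty)$. Finally, since $\nu(0)=0$ (Section~\ref{sectionRestrictNoLeaf}) we have $g_\nu(0)=0$, while $g_\nu(1)=1$; and because $b>1$ rules out $\nu=\delta_1$, the function $g_\nu$ is strictly convex on $(0,1]$, so $g_\nu(s)<s$ for every $s\in(0,1)$. Therefore $q_\infty\in\{0,1\}$, which combined with the complementarity recalled above gives the claim.

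The delicate point is the derivation of the recursion $q(x)=g_\nu(\E[q(A_c(x))])$ together with its passage to the limit: one must set up the branching decomposition of $F$ rigorously — identifying the sub-chains as genuine independent copies of $F$ started at the random points $f(\fils{o}{i})$, and matching the definition of ``dying out by time $n$'' across the height shift between $\T$ and its subtrees — and then justify the interchange of limit and expectation, for which the monotonicity of $q$ (from Proposition~\ref{propmonotonyF}) and the almost-sure divergence $A_c(x)\to\infty$ are the key ingredients. Everything else, including the elementary discussion of the fixed points of $g_\nu$, is routine.
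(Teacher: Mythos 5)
Your argument is correct, and it establishes the dichotomy by a genuinely different mechanism than the paper. You derive the exact fixed-point identity $q(x)=g_\nu\big(\E[q(A_c(x))]\big)$ from the branching decomposition at the root, pass to the limit $x\to\infty$ using the monotonicity of $q$ (from Proposition~\ref{propmonotonyF}) together with $A_c(x)\to\infty$, and then use the strict convexity of the offspring generating function (with $g_\nu(0)=0$ because $\nu(0)=0$, and $\nu\neq\delta_1$ because $b>1$) to conclude that the only fixed points in $[0,1]$ are $0$ and $1$. The paper argues differently: assuming $\mathbb{P}_{x_0}\{F\mbox{ survives}\}\geq\alpha>0$ for some $x_0$, it shows that for $x$ and $k$ large the number $N_k^{[x_0,\infty)}$ of particles above $x_0$ at generation $k$ exceeds any prescribed $m$ with probability at least $1-\varepsilon$ — combining (\ref{inter1}), which uses that $A_c(x)/x$ converges to a limit with no atom at $0$, with the growth of the tree (\ref{inter2}) — and each such particle gives an independent chance $\alpha$ of survival, forcing $\lim_x\mathbb{P}_x\{F\mbox{ survives}\}=1$. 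Your route is the classical extinction-probability computation and is arguably cleaner, but it relies on the subtrees rooted at the children of the root being exactly i.i.d.\ Galton--Watson trees with conditionally i.i.d.\ starting positions (which does hold here), whereas the paper only uses the branching property and stochastic domination in one direction, and is in that sense more robust. The step you flag as delicate — interchanging limit and expectation in $\E[q(A_c(x))]\to q_\infty$ — is indeed sound: $q$ is monotone and bounded, and $A_c(x)\to\infty$ in probability (or almost surely under the natural coupling of Section~\ref{sectionprelim01}), so bounded convergence applies.
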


\begin{pf} Since the $0-1$ law holds for any deterministic tree, we
just need to show that the r.h.s. limit of (\ref{limitFsurvive1}) is
either $0$ or $1$.
Suppose that this limit is nonzero. We can find $x_0 >c$ and
$\alpha> 0$ such that
\[
\mathbb{P}_{x_0}\{\mbox{$F$ survives}\} \geq\alpha.
\]
Given an interval I, let $N^{I}_k$ denote the number of particles in
$F$ located inside~$I$ at time $k$, that is,
%
\begin{equation}\label{defNI}
N^{I}_k \defeq\#\{v\in\T, |v|=k \mbox{ and } f(v) \in I\}.
\end{equation}
Since the particles in $F$ evolve independently, conditionally on
\mbox{$(f(v), |v|\leq k)$}, the process $(f(v), |v|\geq k)$ has the same
law as the union of $\#\{v\in\T,  |v|=k\}$ independent branching
Markov chains $F$ starting from the positions $F_k = (f(v),
|v|=k)$. Making use of the stochastic monotonicity of $F$ w.r.t. the
position of the initial particle (Proposition
\ref{propmonotonyF}), we deduce that, for any $\varepsilon>0$, we
can find $m$ large enough such that, for any $k$ and any $x$,
%
\begin{eqnarray}\label{inter0}
\mathbb{P}_{x}\{\mbox{$F$ survives}\} &\geq& \mathbb
{P}_{x}\bigl\{\mbox{$N_k^{[x_0,\infty)} \geq m$ and $F$ survives}\bigr\}\nonumber\\
&\geq& \mathbb{P}_{x}\bigl\{\mbox{$N_k^{[x_0,\infty)} \geq
m$}\bigr\}
(1 -\mathbb{P}_{x_0}\{\mbox{$F$ dies out}\}^m)
\nonumber
\\[-8pt]
\\[-8pt]
\nonumber
&\geq& \mathbb{P}_{x}\bigl\{\mbox{$N_k^{[x_0,\infty)} \geq
m$}\bigr\}
\bigl(1 - (1-\alpha)^m\bigr)\\
&\geq& \mathbb{P}_{x}\bigl\{\mbox{$N_k^{[x_0,\infty)} \geq
m$}\bigr\}(1-\varepsilon).\nonumber
\end{eqnarray}
On the one hand, we have, for any $y>c$,
\[
\mathbb{P}_{x}\{ \mbox{$f(v) > y$ for every $v$ of height
$1$}\} = \sum_{b = 1}^{\infty} \nu(b)\P\{A_c(x)/x > y/x\}^{b}.
\]
Since the sequence $A_c(x) /x$ converges as $x\to\infty$ toward a
random variable which has no atom at $0$ (cf. Section
\ref{sectionprelim01}), the previous equality implies
\[
\lim_{x\to\infty}\mathbb{P}_{x}\{ \mbox{$f(v) > y$ for every
$v$ of height $1$}\} = 1.
\]
Using again the stochastic monotonicity of $F$ w.r.t. its starting
point, it follows by induction that, for any fixed $k$,
%
\begin{equation}\label{inter1}
\lim_{x\to\infty}\mathbb{P}_{x}\{ \mbox{$f(v) > x_0$ for every
$v\in\T$ s.t. $|v|=k$}\} = 1.
\end{equation}
On the other hand, the tree $\T$ grows exponentially so that, for
any $m$,
%
\begin{equation}\label{inter2}
\lim_{k\to\infty}\mathbb{P}\bigl\{ \#\{v\in\T,  |v|= k\} \geq m\bigr\} = 1.
\end{equation}
Combining (\ref{inter1}) and (\ref{inter2}), we deduce that, for any
$m$, we can find $k$ and $x$ large enough such that
%
\begin{equation}
\mathbb{P}_{x}\bigl\{\mbox{$N_k^{[x_0,\infty)} \geq m$}\bigr\} \geq
1-\varepsilon,
\end{equation}
which yields, using (\ref{inter0}),
\[
\mathbb{P}_{x}\{\mbox{$F$ survives}\}\geq(1-\varepsilon)^2.
\]
\upqed\end{pf}

\subsection{\texorpdfstring{Proof of transience when $b\mu(c)>1$}{Proof of transience when b mu(c)>1}}\label{sectiontransience}
Let $(Z_n)_{n\geq0}$ be a Markov chain on $[c,\infty)$ with
transition probabilities given by (\ref{transitionProbaZ}) and
denote by $\P_{x}$ the probability under which $Z$ starts from $Z_0
=x$. Let $T\in\mathcal{T}$ and fix $v\in T$. It follows from the
definition of the branching Markov chain $F$ that
\[
\mathbb{P}_{x}\{f(v) \in E  |  \T=T\} =\P_{x}\bigl\{Z_{|v|}\in E \bigr\}.
\]
Let us for the time being admit that, for some $x_0 > c$, we have
%
\begin{equation}\label{expodecayZ}
\liminf_{n\rightarrow\infty} \P_{x_0}\{Z_n \geq
x_0\}^{1/n} \geq\mu(c).
\end{equation}
Recalling that $N_k^{[x_0,\infty)}$ denotes the number of particles
of $F$ located above level $x_0$ at time $k$, we find, when $\mu(c)b
> 1$, that for $k_0$ large enough,
\begin{eqnarray*}
\mathbb{E}_{x_0}\bigl[N_{k_0}^{[x_0,\infty)}\bigr]&=&\mathbb{E}_{x_0}
\biggl[\sum
_{|v| = k_0} \un_{\{f(v)\geq x_0\}}\biggr]\nonumber\\
&=& \mathbb{E}[\# \{v\in\T,  |v|=k_0\}]\P_{x_0}\{Z_{k_0} \geq x_0\}
\nonumber
\\[-8pt]
\\[-8pt]
\nonumber
&=& (b\P_{x_0}\{Z_{k_0}\geq x_0\}^{1/k_0})^{k_0} \\
&\geq& 2.\nonumber
\end{eqnarray*}
Just as in the proof of Proposition \ref{prop0-1}, making use of the
branching property of~$F$ and keeping only the particles located
above $x_0$ at times $k_0 n$, $n\geq0$, it follows by induction
that, under $\mathbb{P}_{x_0}$, the process $(N_{k_0
n}^{[x_0,\infty)})_{n\geq0}$ stochastically dominates a classical
Galton--Watson process with reproduction law
$N_{k_0}^{[x_0,\infty)}$. Since
$\mathbb{E}_{x_0}[N_{k_0}^{[x_0,\infty)}] \geq2$, this
Galton--Watson process has probability $\alpha>0$ of non-extinction,
which implies
\[
\mathbb{P}_{x_0}\{\mbox{$F$ survives}\} \geq\alpha.
\]
We conclude using (\ref{limitFsurvive1}) and Proposition
\ref{prop0-1} that
\[
\mathbb{P}\{\mbox{$X$ visits each vertex of $\T$ finitely many
times}\}=1.
\]
It remains to prove (\ref{expodecayZ}) which is a consequence of

\begin{duge}\label{keylemma} Let $(S(x),x \in\R)$ be a collection of
real-valued random variables. Assume that the following hold:
\begin{longlist}[(a)]
\item[(a)] For any $x < y$, the random variable $x+ S(x)$ is
stochastically dominated by $y + S(y)$.
\item[(b)] $S(x)$ converges in law, as $x$ tends to $+\infty$, toward a
random variable~$S(\infty)$ whose law is absolutely continuous w.r.t.
the Lebesgue
measure and $\P\{S(\infty) > 0\} >0$.
\item[(c)] The Laplace transform $\phi(\lambda) \defeq\mathbf
{E}[e^{\lambda S(\infty)}]$ reaches its minimum at some point $\rho>
0$ which belongs to the
nonempty interior of its definition domain $\mathcal{D} \defeq\{
\lambda\in\R, \phi(\lambda)<\infty\}$.
\end{longlist}
Let $Y = (Y_n, n\geq0)$ denote a real-valued Markov chain with
transition kernel $\P\{Y_{n+1} \in E  |  Y_n = y\} = \P\{ S(y) + y
\in E\}$ and let $\tau_x$ be the first time $Y$ enters the interval
$(-\infty,x)$. Denoting by $\P_x$ the law of $Y$ starting from $x$,
we have, for all $x$ large enough,
%
\begin{equation}\label{mainlemmres}
\lim_{n\to\infty} \P_x\{\tau_x>n\}^{1/n}\geq\phi(\rho).
\end{equation}
\end{duge}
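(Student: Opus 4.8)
The plan is to run a standard change-of-measure (tilting) argument adapted to the fact that the increments $S(y)$ are not i.i.d.\ but converge as $y\to\infty$ to the i.i.d.\ increment $S(\infty)$ with minimal Laplace transform at $\rho>0$. First I would fix $\varepsilon>0$ and use assumptions (b) and (c) to choose a threshold $M$ large and a slightly enlarged exponent $\rho'$ close to $\rho$ (still in the interior of $\mathcal D$) such that, for all $y\ge M$, the tilted increment $S(y)$ satisfies $\mathbf E[e^{\rho' S(y)}]\le \phi(\rho)+\varepsilon$ and, crucially, that the tilted law of $S(y)$ puts mass bounded below, uniformly in $y\ge M$, on an interval $[0,\delta]$ of non-negative values — this last point uses that $S(\infty)$ has a density and $\P\{S(\infty)>0\}>0$, together with a uniform-integrability/weak-convergence argument to transfer the property from $S(\infty)$ to $S(y)$ for $y$ large. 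The role of hypothesis (a), stochastic monotonicity of $y+S(y)$, is to guarantee that once $Y$ is above $M$ and we only track lower bounds, we may always compare with the ``worst'' starting point, so the chain stays controlled.

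Next I would introduce the exponential martingale. For the chain $Y$ started at $x$ (with $x$ chosen large, in particular $x\ge M$ and, say, $x\ge M+\delta$ so there is room), consider $W_n \defeq \exp\big(\rho'(Y_n - Y_0)\big)\prod_{k=0}^{n-1}\psi_k^{-1}$ where $\psi_k$ is the conditional Laplace transform $\mathbf E[e^{\rho'(Y_{k+1}-Y_k)}\mid Y_k]$; this is a positive martingale of mean one, and on the event $\{\tau_x>n\}$ all the sites visited satisfy $Y_k\ge x\ge M$, so each $\psi_k\le \phi(\rho)+\varepsilon$. Define the tilted measure $\widetilde{\P}_x$ by $d\widetilde{\P}_x/d\P_x = W_{n}$ on $\mathcal F_n$; under $\widetilde{\P}_x$ the increments are the $\rho'$-tilts of the original ones, hence (by the uniform lower bound on the tilted mass in $[0,\delta]$) the chain $Y$ under $\widetilde\P_x$ has a uniformly positive probability, at each step where it sits above $M$, of making a non-negative increment in $[0,\delta]$. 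A Borel--Cantelli / comparison-with-a-biased-random-walk argument then shows that $\widetilde{\P}_x\{\tau_x=\infty\}\ge \beta$ for some $\beta=\beta(x)>0$ once $x$ is large: under the tilted law the walk has a positive drift away from the barrier $x$ (or at least never goes below it with positive probability), so it survives forever with positive probability.

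Finally I would undo the change of measure on the event $\{\tau_x>n\}$:
\begin{equation*}
\P_x\{\tau_x>n\} = \widetilde{\mathbf E}_x\big[W_n^{-1}\un_{\{\tau_x>n\}}\big]
= \widetilde{\mathbf E}_x\Big[e^{-\rho'(Y_n-x)}\prod_{k=0}^{n-1}\psi_k\,\un_{\{\tau_x>n\}}\Big].
\end{equation*}
On $\{\tau_x>n\}$ we have $\prod_k\psi_k$ bounded below by... no — here we need $\psi_k$ small is bad, so instead I bound $\prod_{k=0}^{n-1}\psi_k \ge$ nothing directly; the right move is the reverse: use that on $\{\tau_x>n\}$ each $\psi_k\ge \phi(\rho)$ is false in general, so instead I lower-bound by restricting to the survival event and using $Y_n-x\le$ (something controlled). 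Concretely, restrict the expectation to the sub-event $\{\tau_x=\infty\}\cap\{Y_n - x\le Cn\}$, which under $\widetilde\P_x$ has probability at least $\beta/2$ for large $n$ (linear upper bound on $Y_n$ by the law of large numbers for the tilted increments), and on which $\prod_{k=0}^{n-1}\psi_k\ge$ — here I instead use that each conditional Laplace transform is $\ge \phi(\rho')\ge \phi(\rho)e^{-o(1)}$? This is the delicate point. The clean route is to keep $\rho'=\rho$ from the start and note $\psi_k\ge \inf_y \mathbf E[e^{\rho S(y)}]$; by (a) and (b) this infimum equals $\phi(\rho)$ asymptotically, so for $y\ge M$, $\psi_k\ge \phi(\rho)-\varepsilon$. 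Then $\prod_{k=0}^{n-1}\psi_k\ge(\phi(\rho)-\varepsilon)^n$ and $e^{-\rho(Y_n-x)}\ge e^{-\rho C n}$, giving $\P_x\{\tau_x>n\}\ge (\beta/2)\,(\phi(\rho)-\varepsilon)^n e^{-\rho C n}$. Taking $n$-th roots yields $\liminf_n \P_x\{\tau_x>n\}^{1/n}\ge (\phi(\rho)-\varepsilon)e^{-\rho C}$, which is not quite $\phi(\rho)$; to remove the spurious $e^{-\rho C}$ one must choose the threshold $\delta$ for the ``good'' increments tending to $0$ and couple with a walk of vanishingly small drift, so that the tilted walk survives with positive probability while $Y_n-x=o(n)$, killing the $e^{-\rho C n}$ factor. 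Then $\varepsilon\downarrow 0$ finishes it. I expect the main obstacle to be exactly this last balancing act — arranging that under the tilted measure the chain both (i) stays above the barrier $x$ with positive probability and (ii) has sublinear displacement $Y_n-x=o(n)$ — which forces a careful, quantitative version of the weak-convergence input (b): one needs the tilted laws of $S(y)$ to converge to that of $S(\infty)$ strongly enough (e.g.\ in a Wasserstein-type sense, using the density in (b)) that the small-drift coupling is legitimate uniformly in $y\ge M$.
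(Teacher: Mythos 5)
Your proposal takes a different route from the paper: you try to run the exponential tilting argument directly on the inhomogeneous Markov chain, whereas the paper first constructs, for each $x$, a stochastic minorant $\hat S(x)$ with tail $\inf_{y\ge x}\P\{S(y)>t\}$, observes that the i.i.d.\ walk with step $\hat S(x)$ killed below $x$ is dominated by $Y$ under $\P_x$, and then simply invokes the known persistence-probability asymptotics for i.i.d.\ random walks (Theorem 1 of Biggins--Lubachevsky--Shwartz--Weiss) to get the rate $\hat\phi_x(\rho_x)\to\phi(\rho)$. The minorant trick is what makes the hypotheses sufficient: since $\hat S(x)$ is dominated by $S(\infty)$, its Laplace transform on $[0,\infty)$ is automatically controlled by $\phi$, and no uniform integrability beyond assumption (b) is ever needed.

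Your sketch has a genuine unresolved gap at its core, which you yourself flag. If you tilt at the minimizer $\rho$, the tilted limiting increment has mean $\phi'(\rho)/\phi(\rho)=0$, so the tilted walk is (asymptotically) driftless and does \emph{not} stay above the barrier forever with positive probability; your claim $\widetilde\P_x\{\tau_x=\infty\}\ge\beta>0$ fails there. If instead you tilt at $\rho'>\rho$ to create a positive drift $d(\rho')$, the Radon--Nikodym factor $e^{-\rho'(Y_n-x)}$ costs $e^{-\rho' d(\rho')n}$, and recovering $\phi(\rho)$ requires the double limit $\rho'\downarrow\rho$ (equivalently your ``vanishingly small drift'' coupling), together with a law of large numbers and a uniform survival estimate for the \emph{tilted, non-homogeneous} chain. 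Carrying this out needs more than convergence in law of $S(y)$: you must show the tilted transition kernels converge and are uniformly controlled for $y\ge M$, which assumption (b) does not give directly (you acknowledge this by asking for Wasserstein-type convergence). Finally, even granting all of that, your bound for a \emph{fixed} $x$ is only $(\phi(\rho)-\varepsilon)e^{-o(1)}$ with $\varepsilon$ tied to the threshold $M\le x$; to upgrade this to the clean bound $\phi(\rho)$ for a fixed large $x$ you still need the paper's last step, namely that $Y$ started from $y$ reaches any higher level $x$ before $(-\infty,y)$ with positive probability, so that $\lim_n\P_y\{\tau_y>n\}^{1/n}\ge\lim_n\P_x\{\tau_x>n\}^{1/n}$ and one may then let $x\to\infty$. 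This transfer step is absent from your proposal. In short, the strategy is viable in principle but, as written, the key survival/displacement balance is asserted rather than proved, and the paper's minorant-plus-citation route avoids precisely the difficulties your argument runs into.
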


We apply the lemma to the Markov chain $Y$ defined by
\[
Y_n \defeq\log Z_{n}.
\]
According to (\ref{transitionProbaZ}), we have
\[
\P\{Y_{n+1} \in E  |  Y_n = y\} = \P\{ S(y) + y \in E\}
\]
with $S(y) \defeq\log m_c(\exp(y))$ and $S(\infty) \defeq\log
m_c(\infty)$ where $m_c$ is the martingale of Section
\ref{sectionprelim01}. On the one hand, assumption (a) holds since
$A_c(x) \leq A_c(y)$ for all $x\leq y$. On the other hand, the
results of Davis and Volkov \cite{DavisVolkov02,DavisVolkov04}
recalled in Section~\ref{sectionprelim01} imply that assumptions
(b),(c) also hold and
\[
\inf_{\lambda\in\R}\E\bigl[e^{\lambda S(\infty)}\bigr] = \mu(c).
\]
Thus, we conclude that, for $x_0$ large enough,
\[
\liminf_{n\rightarrow\infty} \P_{x_0}\{Z_n \geq
x_0\}^{1/n} \geq\lim_{n\rightarrow\infty} \P_{\log
x_0}\Bigl\{\min_{1\leq i\leq n}Y_i \geq\log x_0\Bigr\}^{1/n} \geq
\mu(c).\vadjust{\goodbreak}
\]

\begin{pf*}{Proof of Lemma \protect\ref{keylemma}}
Assumption (a) implies that for $x < y$, the Markov chain $Y$ under
$\P_x$ is stochastically dominated by $Y$ under $\P_y$. Thus, using
the Markov property, we get that, for any $n,m$,
\[
\P_x\{\tau_x > n+m\} \geq\P_x\{\tau_x>n\}\P_x\{\tau_x>m\}.
\]
The superadditivity of the sequence $\log\P_x\{\tau_x>n\}$ now
implies that the limit in~(\ref{mainlemmres}) exists. It remains to
prove the lower bound for $x$ large enough.

Set $g_x(t) \defeq\P\{S(x) > t\}$ and $g(t) \defeq\P\{S(\infty) >
t\}$. In view of assumption (b), as $x$ goes to $+\infty$, $g_x$
converges uniformly toward $g$. Define
\[
\hat{g}_x(t) \defeq\inf_{y\geq x}g_y(t).
\]
For each $x$, the function $\hat{g}_x$ is c\`{a}dl\`{a}g, non-increasing,
with $\lim_{t\to-\infty}\hat{g}_x(t) = 1$ and $\lim_{t\to
+\infty}\hat{g}_x(t) = 0$. Thus, for each $x$, we can consider a
random variable~$\hat{S}(x)$ such that $\P\{\hat{S}(x) > t\} =
\hat{g}_x(t)$. By construction, the sequence of random variables
$\hat{S}(x)$ is stochastically monotonic and converges in law toward
the random variable $S(\infty)$. Let $\hat{Y}^x$ denote a random
walk with step~$\hat{S}(x)$, that is, $\hat{Y}^{x}_{n+1} -
\hat{Y}^{x}_n \stackrel{\mathrm{law}}{=} \hat{S}(x)$.

By construction, the random variable $\hat{S}(x)$ is stochastically
dominated by~$S(y)$ for any $y\geq x$. Combining this fact and the
stochastic monotonicity of the Markov chain $Y$ w.r.t. its starting
point, it follows by induction that the random walk $\hat{Y}^x$
started from $x$ and killed when it enters the interval
$(-\infty,x)$ is stochastically dominated by $Y$ under $\P_{x}$. In
particular, denoting by $\hat{\tau}^x_0$ the first time $\hat{Y}^x$
enters the interval $(-\infty,0)$, it follows that $\hat{\tau}^x_0$
under~$\P_0$ (i.e., the walk $\hat{Y}^x$ started from $0$) is
stochastically dominated by $\tau_x$ under~$\P_x$. Hence,
%
\begin{equation}\label{mainlemmE1}
\lim_{n\to\infty} \P_x\{\tau_x>n\}^{1/n}\geq\liminf_{n\to\infty}
\P_0\{\hat{\tau}^{x}_0 >n\}^{1/n}.
\end{equation}
Let $\hat{\phi}_x(\lambda) \defeq\mathbf{E}[e^{\lambda
\hat{S}(x)}]$ with definition domain $\hat{\mathcal{D}}_x \defeq\{
\lambda\in\R, \hat{\phi}_x(\lambda)<\infty\}$. Since $\hat{S}(x)$
is stochastically dominated by $S(\infty)$, we have $\mathcal{D}
\cap[0,\infty) \subset\hat{\mathcal{D}}_x \cap[0,\infty)$.
According to assumption (c), we can choose $a>0$ such that $I_a
\defeq[\rho-a,\rho+a] \subset\mathcal{D}\cap[0,\infty)$. On
$I_a$, as $x$ goes to $+\infty$, the functions $\hat{\phi}_x$
converge uniformly toward~$\phi$. Making use of the strict convexity
of a Laplace transform, it follows that, for all $x$ large enough,
the function $\hat{\phi}_x$ verifies assumption (c), that is, $\hat
{\phi}_x$ reaches its minimum on $\hat{\mathcal{D}}_x$ at some
point $\rho_x \in I_a$. Moveover, we have
%
\begin{equation}\label{mainlemmE1A}
\lim_{x\to\infty} \hat{\phi}_x(\rho_x) = \phi(\rho).
\end{equation}
Applying now Theorem 1 of \cite{BigginsLubachevskyShwartzWeiss91} to
the random walk $\hat{Y}^x$ with step distribution~$\hat{S}(x)$
gives
%
\begin{equation}\label{mainlemmE2}
\liminf_{n\to\infty} \P_0\{\hat{\tau}^{x}_0 >n\}^{1/n} =
\hat{\phi}_x(\rho_x).
\end{equation}
Combining (\ref{mainlemmE1}) and (\ref{mainlemmE2}), we get that
%
\begin{equation}\label{mainlemmE3}
\lim_{n\to\infty} \P_x\{\tau_x>n\}^{1/n} \geq\hat{\phi}_x(\rho_x).
\end{equation}
Assumption (b) also implies that, for some $\varepsilon,\eta>0$
small enough, there exists $x_0$ such that, for all $x\geq x_0$, we
have $\P\{S(x) > \varepsilon\} > \eta$, thus $\P\{Y_{n+1} >
\varepsilon+ Y_{n}   |   Y_n = x\} > \eta$. In particular, for
$x > y > x_0$, the event $\mathcal{E}(x,y) = \{ \mbox{$Y$ enters}\break
[x,\infty)\ \mbox{before entering $(-\infty,y)$} \}$ has a strictly
positive probability under $\P_y$. Therefore, using again the Markov
property and the stochastic monotonicity of $Y$ w.r.t. its starting
point, we get
\[
\P_{y}\{\tau_y > n\} \geq\P_y\{\mathcal{E}(x,y)\}\P_{x}\{\tau_x >
n\}
\]
which yields
%
\begin{equation}\label{mainlemmE4}
\lim_{n\to\infty} \P_y\{\tau_y>n\}^{1/n} \geq\lim_{n\to\infty}
\P_x\{\tau_x>n\}^{1/n}.
\end{equation}
Combining (\ref{mainlemmE1A}), (\ref{mainlemmE3}) and
(\ref{mainlemmE4}), we conclude that, for $y \geq x_0$,
%
\begin{equation}
\lim_{n\to\infty} \P_y\{\tau_y>n\}^{1/n} \geq\lim_{x\to+\infty}
\hat{\phi}_x(\rho_x) = \phi(\rho).
\end{equation}
\upqed\end{pf*}

\begin{rem}
$\!\!\!$Suppose that the $\operatorname{VRJP}(c)$ is recurrent on~$\T$. Recall that~$\xi(t)$
denotes the time where the local time of the walk at the
origin reaches $t-c$. We can express $\xi(t)$ in terms of the
branching Markov chain $F$ and we get, using that $\E_{t}[Z_n] = t$
for all $n$,
%
\begin{equation}\label{nulrec}
\qquad\mathbb{E}[\xi(t)] = \mathbb{E}_{t}\biggl[ \sum_{v\in\T}\bigl(f(v) -
c\bigr)\biggr] = \sum_{n=0}^{\infty} b^n\E_{t}[Z_n -c] =
\sum_{n=0}^{\infty} b^n(t -c) = \infty
\end{equation}
for any $t>c$. In particular, denoting by $\zeta_o$ the first time
the walk returns to the root of the tree, it easily follows from
(\ref{nulrec}), by conditioning on the time the walk makes its first
jump and applying the restriction principle, that any recurrent VRJP
on $\T$ is ``null'' recurrent in the sense that $\mathbb{E}[\zeta_o] =
\infty$.
\end{rem}

\subsection{\texorpdfstring{The critical case $b \mu(c) = 1$}{The critical case b mu(c)=1}}\label{sectioncritique}

The following proposition directly implies that the $\operatorname{VRJP}(c)$ on a
Galton--Watson tree is recurrent in the critical case \mbox{$b\mu(c) =1$}
since we already know that recurrence occurs when $b \mu(c) < 1$.

\begin{prop}\label{propcritique}
Assume that the $\operatorname{VRJP}(c)$ is transient on some Gal\-ton--Watson tree $\T
$ without leaves and with mean $b>1$. Then, there exists a~Gal\-ton--Watson tree $\tilde{\T}$
(with leaves) with mean $1 < \tilde{b} < b$ such that the $\operatorname{VRJP}(c)$ on
$\tilde{\T}$ is also transient on the event that $\tilde{\T}$ is infinite.
\end{prop}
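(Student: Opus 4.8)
The plan is to exploit the fact that, by Proposition \ref{prop0-1} and the discussion in Section \ref{sectiontransience}, transience of the VRJP($c$) on $\T$ is equivalent to the survival (with positive annealed probability, from a high enough starting point $x_0$) of the branching Markov chain $F$ — equivalently, to the fact that for $k_0$ large enough, $\mathbb{E}_{x_0}[N_{k_0}^{[x_0,\infty)}] = b^{k_0}\,\P_{x_0}\{Z_{k_0}\geq x_0\} > 1$, where $Z$ is the Markov chain with kernel (\ref{transitionProbaZ}). The idea is then to \emph{thin} the Galton-Watson tree slightly: if each vertex of $\T$ independently keeps each of its children with probability $p$ close to $1$, the resulting tree $\tilde{\T}$ is again Galton-Watson with mean $\tilde b = pb$, and for $p$ close enough to $1$ we still have $1 < \tilde b < b$ and $\tilde b^{\,k_0}\,\P_{x_0}\{Z_{k_0}\geq x_0\} > 1$. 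The delicate point is that thinning $\T$ a priori changes the \emph{dynamics} of the VRJP — the process no longer "sees" the pruned subtrees — so one cannot directly say that the VRJP on $\tilde\T$ is dominated by the one on $\T$.

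To get around this, I would run the branching Markov chain $F$ on the \emph{full} tree $\T$ but perform the pruning at the level of $F$: along the genealogy $\T$, independently at each vertex retain each child with probability $p$, obtaining a random subtree $\tilde\T \subset \T$ which, conditionally on nothing, is Galton-Watson with offspring mean $\tilde b = pb$. Restricting $F$ to $\tilde\T$ yields a process $\tilde F$ whose law, \emph{conditionally on $\tilde\T$}, is exactly that of the branching Markov chain associated to the VRJP($c$) run on $\tilde\T$ — this is because rules (a),(b) defining $F$ are local (each particle's position depends only on its father's position via an independent copy of $A_c$), so restricting the genealogy just deletes particles and keeps the conditional transition law of the survivors intact. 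By construction $\tilde N_k^{[x_0,\infty)} \le N_k^{[x_0,\infty)}$, but more importantly $\mathbb{E}_{x_0}[\tilde N_{k_0}^{[x_0,\infty)}] = \tilde b^{\,k_0}\,\P_{x_0}\{Z_{k_0}\geq x_0\}$, which by the choice of $p$ is still $>1$. Then, exactly as in Section \ref{sectiontransience}, the branching property of $\tilde F$ shows that $(\tilde N_{k_0 n}^{[x_0,\infty)})_{n\ge0}$ dominates a supercritical Galton-Watson process, hence $\tilde F$ survives with positive probability from $x_0$, and by (\ref{limitFsurvive1}) applied to $\tilde\T$ together with Proposition \ref{prop0-1}, the VRJP($c$) on $\tilde\T$ is transient on the event that $\tilde\T$ is infinite.

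One subtlety to address carefully: $\tilde\T$ has leaves (pruning can create dead ends), so Proposition \ref{prop0-1} as stated does not directly apply to $\tilde\T$. This is handled by the reduction of Section \ref{sectionRestrictNoLeaf}: by the restriction principle, transience of the VRJP($c$) on $\tilde\T$ is equivalent to transience on its backbone $\tilde\T_g$, which is leafless with the same mean $\tilde b > 1$, so the $0$–$1$ law and the equivalence (\ref{limitFsurvive1}) apply there; alternatively, one simply re-runs the argument of Proposition \ref{prop0-1} and Section \ref{sectiontransience} directly, noting that the only property of $\T$ used is that the number of vertices at level $k$ grows (which holds for $\tilde\T$ on the survival event by $\tilde b>1$). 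I expect the main obstacle to be precisely this point — making rigorous that pruning the genealogy of $F$ gives \emph{exactly} the VRJP branching chain on the smaller tree, and dealing cleanly with the leaves of $\tilde\T$ — rather than the probabilistic estimate, which is a verbatim repetition of Section \ref{sectiontransience} with $b$ replaced by $\tilde b = pb$.
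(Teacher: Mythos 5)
Your overall strategy --- percolate the genealogy, restrict the branching Markov chain $F$ to the percolated tree, and show that the restricted chain still dominates a supercritical Galton--Watson process --- is exactly the paper's. The restriction step (pruning the genealogy of $F$ yields precisely the branching chain of the VRJP on $\tilde{\T}$) and your treatment of the leaves of $\tilde{\T}$ via the backbone reduction are both sound and are handled the same way in the paper. The probabilistic estimate, however, is not the ``verbatim repetition'' of Section \ref{sectiontransience} that you claim, and that is where the gap lies.

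The gap is the asserted equivalence between transience and the first-moment condition $\mathbb{E}_{x_0}[N_{k_0}^{[x_0,\infty)}] = b^{k_0}\,\P_{x_0}\{Z_{k_0}\geq x_0\} > 1$ for some $k_0$. Section \ref{sectiontransience} proves only one implication: if $b\mu(c)>1$ then, via Lemma \ref{keylemma}, this expectation exceeds $1$ for $k_0$ large, whence survival of $F$ and transience. Neither the paper nor your proposal shows the converse, namely that survival of $F$ with positive probability forces this expectation above $1$ at some deterministic time --- and it is the converse you need, since the hypothesis of the proposition is transience, not $b\mu(c)>1$ (indeed the proposition is applied precisely at criticality). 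A priori, $F$ could survive while the non-absorbed particles drift down toward the absorbing point $c$ without reaching it, in which case $N_{k}^{[x_0,\infty)}$ could vanish for large $k$ and no thinning parameter $p<1$ would preserve anything. This scenario is exactly what the paper's Lemma \ref{lemmecritique} rules out: using the atom of $A_c$ at $c$ (equation (\ref{tes})) to show that a population confined to $(c,c+\varepsilon)$ dies out exponentially fast, it proves that on the survival event one has $N_n^{[x_0,\infty)}\geq 2$ for infinitely many $n$. Combined with $\lim_{x\to\infty}\mathbb{P}_x\{F\hbox{ survives}\}=1$, this is what lets the paper exhibit, at the stopping time $\sigma_x=\inf\{k\geq 1,\; N_k^{[x,\infty)}\geq 2\}$, at least two particles located high up with probability close to $1$, hence a reproduction mean strictly above $1$ that survives a small percolation. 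Your argument becomes correct once such a lemma is inserted (one can then even recover a deterministic time $k_0$ with $\mathbb{E}_{x_0}[N_{k_0}^{[x_0,\infty)}]>1$ by reverse Fatou, taking $x_0$ large so that the survival probability exceeds $1/2$); without it, the key quantitative input is missing, and it is this input --- not the pruning or the leaves --- that is the real obstacle.
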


The proof of Proposition \ref{propcritique} uses again the
characterization of transience in terms of the positive probability
of survival of the associated branching Markov chain $F$. Roughly
speaking, we show that, conditionally on survival, the number of
particles of $F$ not located at $c$ grows exponentially with time.
This implies\vadjust{\goodbreak} that the branching Markov chain on a small percolation
of the original tree still survives with positive probability. Hence
the VRJP on this percolated tree is also transient.

In the following, we assume as before that the Galton--Watson tree
$\T$ with reproduction law $\nu$ has no leaves and has mean $b>1$ so
that it is infinite and grows exponentially. Recall the definition
of the branching Markov chain $F = (f(v),  v\in\T)$ constructed in
Section \ref{sectionBMC}. We denote by $(\mathcal{F}_n)$ the
natural filtration of $F$:
\[
\mathcal{F}_n \defeq\sigma\bigl(\T_n , (f(v),  v\in\T_n)\bigr).
\]

\begin{duge}\label{lemmecritique} Recall the definition of $N^I_n$
given in (\ref{defNI}). Let $\mathcal{E}(x,k)$ be the event
\[
\mathcal{E}(x,k)\defeq\bigl\{ \mbox{There exist infinitely many $n$ such
that $N_n^{[x,\infty)} \geq k$ } \bigr\}.
\]
For any starting point $x_0 > c$, we have
\[
\mathcal{E}(x_0,2)=\{F\mbox{ survives}\}\qquad
\mbox{$\mathbb{P}_{x_0}$-a.s.}
\]
\end{duge}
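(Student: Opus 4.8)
The plan is to prove the two inclusions separately; the inclusion $\mathcal{E}(x_0,2) \subseteq \{F \text{ survives}\}$ is immediate, since if $N_n^{[x_0,\infty)} \geq 2$ (indeed $\geq 1$) for infinitely many $n$, then in particular there are particles not located at $c$ at arbitrarily large times, so $F$ does not die out. Thus the real content is the reverse inclusion: $\mathbb{P}_{x_0}$-a.s. on $\{F \text{ survives}\}$, there are infinitely many $n$ with $N_n^{[x_0,\infty)} \geq 2$. I would argue by contradiction using a martingale/Borel--Cantelli-type argument on the filtration $(\mathcal{F}_n)$. On survival, the total number of particles $\#\{|v|=n\}$ grows exponentially (since $\nu(0)=0$ and $b>1$), so there are always many particles alive; the issue is only to show that infinitely often at least two of them sit above the threshold $x_0$.

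The key mechanism is the following: once a particle $v$ is at a large position $f(v)$, with probability bounded below its children land above $x_0$ (this is exactly \eqref{inter1}, a consequence of the fact that $A_c(x)/x \to m_c(\infty)$ with no atom at $0$), and with probability bounded below a particle at position $x_0$ has a descendant at time $k$ that is again at a large position (combine \eqref{densitepositive} to ensure the chain can climb, and the stochastic monotonicity of $F$, Proposition~\ref{propmonotonyF}). The first step I would take is to make this quantitative: fix a large level $M > x_0$ and a horizon $k$, and show there are constants $\delta > 0$ and $p > 0$ (depending on $M,k,x_0$ but not on time) such that, conditionally on $\mathcal{F}_n$, if some particle $v$ at time $n$ has $f(v) \geq M$, then with probability at least $p$ there are at least two particles at time $n+k$ with position $\geq M$. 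This uses: a particle at level $\geq M$ has, by the branching structure, at least two children (if $\nu$ has support beyond $1$) or we iterate to get two descendants; each descendant independently reaches level $\geq M$ again after $k$ steps with probability $\geq \delta$ by \eqref{inter1} plus monotonicity; and we only need two successes. If $\nu$ is concentrated on $\{1\}$ the tree is a single ray and the claim reduces to a statement about the chain $Z$, which I'd handle separately via \eqref{densitepositive} (the chain, conditioned to stay away from $c$, returns above $x_0$ infinitely often); but generically $b>1$ with $\nu(0)=0$ forces branching, giving the two particles.

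With such a "regeneration" estimate in hand, the conclusion follows by a conditional Borel--Cantelli argument (L\'evy's extension, using the filtration $\mathcal{F}_n$). Define the events $G_n = \{$there is a particle at time $nk$ with position $\geq M\}$. On $\{F\text{ survives}\}$ I would first show $\mathbb{P}_{x_0}$-a.s. that $G_n$ occurs for infinitely many $n$: indeed, survival means some lineage never hits $c$, and along that lineage the positions, being a martingale conditioned not to be absorbed at $c$, cannot stay bounded in a neighbourhood of $c$ forever — more precisely, using \eqref{densitepositive} one shows the chain $Z$ restricted to any surviving lineage exceeds $M$ infinitely often. Then, the quantitative estimate gives $\mathbb{P}_{x_0}(\{N_{(n+1)k}^{[x_0,\infty)} \geq 2\} \mid \mathcal{F}_{nk}) \geq p\,\mathbf{1}_{G_n}$, and since $\sum_n \mathbf{1}_{G_n} = \infty$ a.s. on survival, the conditional Borel--Cantelli lemma yields that $\{N_m^{[x_0,\infty)} \geq 2\}$ occurs for infinitely many $m$, which is exactly $\mathcal{E}(x_0,2)$. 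The main obstacle I anticipate is the first half of this last step — showing that on survival a particle reaches a \emph{large} level (not just stays above $c$) infinitely often — since a priori a lineage could survive while hovering just above the absorbing point $c$; ruling this out requires using that the transition kernel of $Z$ has, by \eqref{densitepositive}, a uniformly positive chance of making a jump of fixed size regardless of the current position, so the process cannot be trapped near $c$ without being absorbed.
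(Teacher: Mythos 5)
Your overall architecture (trivial inclusion, then a ``reach a threshold infinitely often'' step followed by a regeneration estimate and conditional Borel--Cantelli) matches the paper's, and your second step is essentially the paper's: from a particle above a fixed level, two children land above $x_0$ with probability bounded below by $\nu[2,\infty)\P\{A_c(\cdot)>x_0\}^2>0$ via (\ref{densitepositive}), and Freedman's conditional Borel--Cantelli lemma finishes. The genuine gap is in your first step, which is the crux of the lemma. You propose to show that on survival some particle exceeds a \emph{large} level $M$ infinitely often, and to justify it by (i) extracting a lineage that is never absorbed and viewing it as ``the martingale conditioned not to be absorbed at $c$'', and (ii) claiming that (\ref{densitepositive}) gives a uniformly positive chance of an upward jump of fixed size regardless of position. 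Both points fail. For (ii): the unconditional probability of any upward movement is $\P\{A_c(t)>c\}=1-e^{-c(t-c)}$, which tends to $0$ as $t\downarrow c$, so there is no uniform lower bound near the absorbing point; (\ref{densitepositive}) is only a bound \emph{conditional} on $\{A_c(t)>c\}$. For (i): the single chain $Z$ converges to $c$ and is in fact absorbed a.s.\ in finite time (precisely because $\P\{Z_{n+1}=c\mid Z_n\}=e^{-c(Z_n-c)}\to 1$), so conditioning a single lineage on never being absorbed is conditioning on a null event; the spine of the surviving branching process is not distributed as such a conditioned chain, and no argument is given for why it cannot hover in $(c,c+\varepsilon)$ -- which is exactly the obstacle you yourself identify.

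The missing idea is the paper's use of the atom at $c$, equation (\ref{tes}): if at some time every surviving particle lies in $(c,c+\varepsilon]$, then each of its children is absorbed with probability at least $e^{-c\varepsilon}$, so the expected number of non-absorbed offspring per particle is at most $b(1-e^{-c\varepsilon})$, which is $<1/2$ for $\varepsilon$ small. Hence on the event that no particle ever exceeds $c+\varepsilon$ after time $k$, the expected total number of non-absorbed particles is finite and $F$ dies out a.s. This is an argument about the \emph{whole population} being subcritical when confined near $c$, not about a single lineage, and it only yields the weak conclusion $\mathcal{E}(c+\varepsilon,1)\supset\{F\text{ survives}\}$ -- which is all that is needed, since your own step two (with the threshold lowered from your large $M$ to $c+\varepsilon$) then produces two particles above $x_0$ with uniformly positive conditional probability whenever some particle sits above $c+\varepsilon$. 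As written, your proposal does not close this gap.
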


\begin{pf}The inclusion $\mathcal{E}(x_0,2)\subset\{F\mbox{
survives}\}$ is trivial. Let $\varepsilon>0$ and set, for $k\leq n$,
\[
B_{k,n} \defeq\mathbb{E}_{x_0}\bigl[N^{(c,\infty)}_n \un_{\{
N_k^{[c+\varepsilon,\infty)} = 0,N_{k+1}^{[c+\varepsilon,\infty)} =
0,\ldots,N_{n-1}^{[c+\varepsilon,\infty)} = 0 \} } \bigr].
\]
Recall that each particle $v$ of $F$ evolves independently and gives
birth to a~random number $B$ (with mean $b$) of children. Moreover,
conditionally on~$\mathcal{F}_n$, the positions\vspace*{-1pt}
$f(\fils{v}{1}),\ldots,f(\fils{v}{B})$ of the children of a particle
$v$ at time~$n$ (i.e., $|v|=n$) are i.i.d. and distributed as
$A_c(f(v))$. Thus, in view of~(\ref{tes}), it follows that
\[
\mathbb{E}\bigl[ N^{(c,\infty)}_{n+1} |  \mathcal{F}_n\bigr] \leq
b(1-e^{-c\varepsilon})N^{(c,\infty)}_{n} \qquad \mbox{on the event
$\bigl\{ N^{(c+\varepsilon,\infty)}_n = 0\bigr\}$.}
\]
Choosing $\varepsilon$ small enough such that
$b(1-e^{-c\varepsilon}) < 1/2$, we get
\begin{eqnarray*}
B_{k,n+1} &=& \mathbb{E}_{x_0}\bigl[ \mathbb{E}\bigl[
N^{(c,\infty)}_{n+1} |  \mathcal{F}_n\bigr]\un_{\{
N_k^{[c+\varepsilon,\infty)} =
0,N_{k+1}^{[c+\varepsilon,\infty)} = 0,\ldots,N_{n}^{[c+\varepsilon
,\infty)} = 0 \}} \bigr]\\
&\leq& \tfrac{1}{2}\mathbb{E}_{x_0}\bigl[ N^{(c,\infty)}_{n} \un
_{\{
N_k^{[c+\varepsilon,\infty)} = 0,N_{k+1}^{[c+\varepsilon,\infty)} =
0,\ldots,N_{n-1}^{[c+\varepsilon,\infty)} = 0 \}} \bigr]\\
&\leq& \tfrac{1}{2}B_{k,n},
\end{eqnarray*}
which yields
\begin{eqnarray*}
\mathbb{E}_{x_0}\Biggl[\Biggl( \sum_{n=k}^{\infty} N_n^{(c,\infty)}
\Biggr)\un_{\{N_{i}^{[c+\varepsilon,\infty)} = 0\ \mathrm{for}\ \mathrm{all}\ i\geq k\}} \Biggr] \leq\sum_{n=k}^{\infty} B_{k,n} <
\infty.
\end{eqnarray*}
Therefore, $F$ dies out $\mathbb{P}_{x_0}$-a.s. on the event
$\{\mbox{$N_{i}^{[c+\varepsilon,\infty)} = 0$ for all $i\geq
k$}\}$. Taking the limit as $k$ goes to infinity, we obtain
%
\begin{equation}\label{firstinclu}
\mathcal{E}(c+\varepsilon,1)\supset\{F\mbox{ survives}\}\qquad
\mbox{$\mathbb{P}_{x_0}$-a.s.}\vadjust{\goodbreak}
\end{equation}
Let now $U_n \defeq\un_{\{N^{[x_0,\infty)}_{n}\geq2 \} }$. Using
the stochastic monotonicity of Proposition \ref{propmonotonyF} and
the fact that $\nu[2,\infty) > 0$ (since $b>1$) and
(\ref{densitepositive}), we find that
\begin{eqnarray}\label{secondinclu}
\mathbb{E}[U_{n+1}  |   \mathcal{F}_n]     & \geq&
\mathbb{E}\bigl[U_{n+1} \un_{\{N^{[c+\varepsilon,\infty)}_{n}\geq1\} }
  |   \mathcal{F}_n\bigr]\nonumber\\
    &\geq&     \un_{\{N^{[c+\varepsilon,\infty
)}_{n}\geq1\}} \mathbb{E}_{c+\varepsilon}[U_{1}]\nonumber\\
    &\geq&
\un_{\{N^{[c+\varepsilon,\infty)}_{n}\geq1\}}
\mathbb{P}_{c+\varepsilon}\left\{\matrix{
\mbox{\scriptsize{the initial particle $o$ has at least two
children}}\vspace*{2pt}\cr
\mbox{\scriptsize{with $f(\fils{o}{1}) \geq x_0$ and $f(\fils{o}{2})
\geq x_0$}}}
\right\}\\
   &=&     \un_{\{N^{[c+\varepsilon,\infty
)}_{n}\geq
1\}} \nu[2,\infty) \P\{A_c(c+\varepsilon) > x_0\}^2\nonumber\\
    &=&     C \un_{\{N^{[c+\varepsilon,\infty)}_{n}\geq1\}}\nonumber
\end{eqnarray}
for some constant $C >0$. Combining (\ref{firstinclu}) and
(\ref{secondinclu}), we get
\[
\sum_{n=1}^{\infty}\mathbb{E}[U_{n+1}  |   \mathcal{F}_n] =
\infty\qquad
\mbox{on the event $\{F\mbox{ survives}\}$.}
\]
A direct application of the conditional Borel--Cantelli Lemma
(cf. \cite{Freedman73}) yields
\[
\sum_{n=1}^{\infty} U_n = \infty\qquad \mbox{on the event
$\{F\mbox{ survives}\}$}
\]
which exactly means that $\mathcal{E}(x_0,2)\supset\{F\mbox{
survives}\}$.
\end{pf}

\begin{pf*}{Proof of Proposition \protect\ref{propcritique}}
Assume that the $\operatorname{VRJP}(c)$ on the Galton--Watson tree $\T$ with
reproduction law $\nu$ is transient. According to Proposition~\ref{prop0-1}
and~(\ref{limitFsurvive1}), we have
\[
\lim_{x\to\infty}\mathbb{P}_x\{F \mbox{ survives}\} = 1.
\]
Define the (possibly infinite) $\mathcal{F}_n$-stopping time
\[
\sigma_x\defeq\inf\bigl\{k\geq1, N_k^{[x,\infty)} \geq2\bigr\}.
\]
Using the result of the previous lemma, we get
%
\begin{equation}\label{lasteq1}
\lim_{x\to\infty}\lim_{\gamma\to\infty}\mathbb{P}_x\{ \sigma
_x\le
\gamma\} = \lim_{x\to\infty}\mathbb{P}_x\{F \mbox{ survives}\} = 1.
\end{equation}
Let now $\tilde{\T}$ be the tree obtained from $\T$ by removing
independently each vertex (and its descendants) with probability
$\eta>0$. The tree $\tilde{\T}$ is again a Galton--Watson tree with
mean $\tilde{b}=b(1-\eta)<b$. We denote by $\tilde{F}$ the
restriction of $F$ to $\tilde{\T}$,
\[
\tilde{F} \defeq\bigl(f(v), v\in\tilde{\T}\bigr).
\]
The restriction principle states that $\tilde{F}$ is the branching
Markov chain associated with the $\operatorname{VRJP}(c)$\vadjust{\goodbreak} on $\tilde{\T}$. Let
$\tilde{M}$ be the number of particles in $\tilde{F}$ located above
$x$ at time $\sigma_x$,
\[
\tilde{M} \defeq\#\{v\in\tilde{\T}, |v|=\sigma_x, f(v)>x\}
\]
(with the convention $\tilde{M}=0$ when $\sigma_x = \infty$). We
have
%
\begin{eqnarray}\label{lasteq2}
\mathbb{E}_{x}[\tilde{M}] &\geq& \mathbb{E}_{x}\bigl[\tilde
{M}\un
_{\{\sigma_x\le\gamma\}}\un_{\{\T_\gamma= \tilde{\T}_\gamma\}
}\bigr]\nonumber\\
&\geq& 2\mathbb{P}_{x}\{ \sigma_x\le\gamma\mbox{ and
}\T
_\gamma= \tilde{\T}_\gamma\}
\nonumber
\\[-8pt]
\\[-8pt]
\nonumber
&\geq& 2(\mathbb{P}_x\{ \sigma_x \leq\gamma\} +
\mathbb
{Q}_{\nu}\{\T_{\gamma} = \tilde{\T}_{\gamma}\} - 1)\\
&\geq& 2\bigl(\mathbb{P}_x\{ \sigma_x\le\gamma\} +
\mathbb{Q}_{\nu}\{\#\T_{\gamma} \leq b^{2\gamma}
\}(1-\eta)^{b^{2\gamma}} -1 \bigr).\nonumber
\end{eqnarray}
Recalling that the distribution of offsprings $\nu$ has mean $b$, we
get
%
\begin{equation}\label{lasteq3}
\lim_{\gamma\to\infty}\mathbb{Q}_{\nu}\{\#\T_{\gamma} \leq
b^{2\gamma} \} = 1.
\end{equation}
Combining (\ref{lasteq1}), (\ref{lasteq2}) and (\ref{lasteq3}), we
can choose $x,\gamma$ large enough and $\eta>0$ small enough such
that
\[
\mathbb{E}_{x}[\tilde{M}] > 1.
\]
Finally, using again the branching structure of $\tilde{F}$ and the
stochastic monotonicity of the process w.r.t. the position of the
initial particle, it follows by induction that the random variable
$\#\{v\in\tilde{\T}, f(v)>x\}$ under $\mathbb{P}_x$ is
stochastically larger than the total progeny of a Galton--Watson
process with reproduction law $\tilde{M}$. Since
\mbox{$\mathbb{E}[\tilde{M}]>1$}, this process is supercritical, hence
\[
\mathbb{P}_{x}\{\mbox{$\tilde{F}$ survives}\} \geq
\mathbb{P}_{x}\bigl\{\#\{v\in\tilde{\T}, f(v)>x\} = \infty\bigr\} > 0
\]
which in turn implies that the $\operatorname{VRJP}(c)$ on the percolated tree
$\tilde{\T}$ is transient on the event that $\tilde{\T}$ is
infinite.
\end{pf*}

%


\printaddresses

\end{document}